\def\section{\@startsection{section}{1}%
  \z@{1.1\linespacing\@plus\linespacing}{.8\linespacing}%
  {\normalfont\Large\scshape\centering}}
\theoremstyle{plain}
\newtheorem*{thmA}{Theorem A}
\newtheorem*{thmB}{Theorem B}
\newtheorem*{conj*}{Root Groups Conjecture}
\newcommand{\etype}[1]{\renewcommand{\labelenumi}{(#1{enumi})}}
\newtheorem*{thm1.2}{(1.2) Theorem}
\newtheorem*{thm1.3}{(1.3) Theorem}
\newtheorem*{thm1.4}{(1.4) Theorem}
\newtheorem*{prop*}{Proposition}
\def\wk{{\operatorname{wk}}}
\def\Miy{\operatorname{Cl}}
\def\comm{{\operatorname {comm} }}
\newtheorem{prop}{Proposition}[section]
\newtheorem{thm}[prop]{Theorem}
\newtheorem{cor}[prop]{Corollary}
\newtheorem{lemma}[prop]{Lemma}
\theoremstyle{definition}
\newtheorem{Def}[prop]{Definition}
\newtheorem*{Def*}{Definition}
\newtheorem{Defsnot}[prop]{Definitions and notation}
\newtheorem{example}[prop]{Example}
\newtheorem{note}[prop]{Note}
\newtheorem*{notation*}{Notation}
\newtheorem{remark}[prop]{Remark}
\newtheorem*{remark*}{Remark}
\newcommand{\calb}{\mathcal{B}}
\newcommand{\calg}{\mathcal{G}}
\newcommand{\ff}{\mathbb{F}}
\newcommand{\uu}{\mathbb{U}}
\newcommand{\Cl}{\operatorname{Cl}}
\newcommand{\ga}{\alpha}
\newcommand{\gb}{\beta}
\newcommand{\gc}{\gamma}
\newcommand{\gd}{\delta}
\newcommand{\gl}{\lambda}
\newcommand{\gvp}{\varphi}
\newcommand{\gr}{\rho}
\newcommand{\gt}{\tau}
\newcommand{\charc}{{\rm char}}
\newcommand{\sminus}{\smallsetminus}
\newcommand{\lan}{\langle}
\newcommand{\ran}{\rangle}
\newcommand{\Aut}{{\rm Aut}}
\def\eroman{\etype{\roman}}
\newcommand{\half}{\frac{1}{2}}
\numberwithin{equation}{section}
\begin{document}
\title[Frobenius forms on weak  PAJs]{Frobenius forms on weakly primitive axial algebras of Jordan type}
\author[Louis Halle Rowen , Yoav Segev]
{Louis Rowen$^*$\qquad Yoav Segev}

\address{Louis Rowen\\
         Department of Mathematics\\
         Bar-Ilan University\\
         Ramat Gan\\
         Israel}
\email{rowen@math.biu.ac.il}
\address{Yoav Segev \\
         Department of Mathematics \\
         Ben-Gurion University \\
         Beer-Sheva 84105 \\
         Israel}
\email{yoavs@math.bgu.ac.il}
\thanks{$^*$The first author was supported by the ISF grant 1994/20 and the Anshel Pfeffer Chair}

\keywords{axial algebra  axis, primitive, weakly primitive, Frobenius form, Jordan
type,  fusion rules, radical}

\subjclass[2010]{Primary: 17A30,  17A01, 17A15,  17C27;
 Secondary: 15A63, 16S36, 16U40, 17A36, 17C27}

\begin{abstract}
In  a previous paper we studied   ``weakly primitive axial algebras'' with respect to more general fusion rules,
for which  at least one axis satisfies the fusion rules. In this continuation, a concise description is  provided of the $2$-generated algebras we obtained in that paper and we show that in certain cases, the algebras have a Frobenius form, a key tool in their study.
\end{abstract}
\date{\today}
 \maketitle

\section{Introduction}
 This paper continues our study of  ``axial algebras,''
generated by ``axes,'' i.e., semisimple idempotents. In \cite{RoSe5} we considered noncommutative 2-generated algebras satisfying certain fusion rules to be reviewed below. The   main result of this paper is to use those results  to define a  Frobenius form. This is significant because the structure of algebras becomes much more tractable when they are endowed with Frobenius forms, as exemplified in \cite[Appendix~A]{ChG}, and as we shall see in proving that $A_0(a)^2 \subseteq A_0(a)$ and $A_{\mu,\nu}(a)$ is in the radical of the form for $\mu\ne \nu$.

\begin{Defsnot}\label{not1}$ $

We review some definitions from \cite{M} and \cite{RoSe5} and add a few more.
\begin{enumerate}
\item $A$ always denotes an  algebra {\it (possibly not commutative)},
over a field~$\ff$ of characteristic $\ne 2.$ In this paper  $\gl,\gd \in \ff \setminus \{ 0, 1  \}$ always.

\item
For  $y\in A,$ write $L_y$ for the left  multiplication map
$z\mapsto yz$, and  $R_y$ for the right  multiplication map
$z\mapsto zy$. $A_\mu(L_y)$ denotes the $\mu$-eigenspace of $L_y$,
for $\mu\in \ff$, which is permitted to be $0$; similarly for $A_\nu(R_y).$  We let $A_{\mu,\nu}(y)=A_{\mu}(L_y)\cap A_{\nu}(R_y).$

 \item
 A {\it left axis}
$a$ is an 
idempotent  for which $A$ is a direct sum of its
left eigenspaces with respect to $L_a$. A  {\it right axis} is
defined analogously. A {\it  (2-sided)  axis} $a$  is a left axis
which is also a    right axis, for which $L_a R_a = R_a L_a$. When $a$ is an axis, if $S_L: = S_L(a)$ is the set of left eigenvalues and $S_R:= S_R(a)$ is  the set of right eigenvalues, then $A$ decomposes as a direct sum of subspaces
\begin{equation}
    \label{dec18} A = \oplus _{\mu\in S_L, \ \nu\in S_R} A_{\mu,\nu}(a). \end{equation}
 The pairs of eigenvalues $(1,1)$ (and possibly  $(0,0)$) play a particularly significant role.
We write
\[
S(a):=S_L(a)\times S_R(a)\setminus\{(0,0), (1,1)\}.
\]
  $A_{\mu,\nu }(a)$, for $(\mu,\nu)\in S$, is called a {\it two-sided eigenspace}. When $a$ is understood,
 We write  $S$  for  $S(a)$,  and {\it we call $a$ an $S$-axis}.
We also write
$A_\mu(a)$ for $A_{\mu,\mu}(a).$ 
For a subset $S'\subseteq S,$ and an element $y\in A,$ we write $y_{\mu,\nu}$ for the projection of $y$ on $A_{\mu,\nu}(a),$ and we let
\[
\textstyle{A_{S'}(a):=\sum_{(\mu,\nu)\in S'}A_{\mu,\nu}(a),\text{ and }y_{S'}=\sum_{(\mu,\nu)\in S'}y_{\mu,\nu}.}
\]
 Thus, $A$~decomposes as a direct sum of eigenspaces
\begin{equation}
    \label{dec17} A= A_1(a) \oplus A_0(a) \oplus A_S(a),
\end{equation}
(one could have $S(a)=\emptyset$), and for $y\in A$ we write $$y = y_1 +y_0 +  y_S,$$ where $y_i\in A_i(a)$, $i=0,1,$ and $y_S\in A_S(a),$ so that $y_S=\sum_{(\mu,\nu)\in S}y_{\mu,\nu}.$

 Let
 \begin{align*}
       &S^\circ(a) = \{ (\mu,\nu)\in S: A_{\mu,\nu}(a)\ne 0\}.\\
     &S^{\dagger}(a)=\{(\mu,\nu)\in S^{\circ}(a): \mu\ne\nu\}.\\
   & S^{\comm}(a)=  \{  (\mu,\mu):(\mu,\mu)\in S^\circ(a)\}.
\end{align*}
Thus $S^\circ(a)= S^{\dagger}(a) \cup   S^{\comm}(a).$

Also define the  {\it complementary set} $\tilde S:= \{(\nu,\mu) : (\mu,\nu) \in S\}.$

\item An $S$-axis $a$ is  {\it
 commutative} if $S^\dagger = \emptyset.$ (Note that this implies that $a$ commutes with all elements in $A.$)
 Otherwise we say that   $a$ is {\it
noncommutative}.

\item  The  {\it fusion rules} in this paper are comprised of the rules:
\begin{enumerate}
    \item $A_1(a)+A_0(a)$ is a subalgebra of $A$;
\item
 $A_{\mu,\nu} (a)$ is
  a left and right $(A_1(a)+A_0(a))$-module.
\item    $A_{\mu,\nu}(a)A_{\mu',\nu'}(a) \subseteq \delta_{\mu,\mu'} \delta_{\nu,\nu'}( A_1(a)+A_0(a)),  $ $\forall (\mu,\nu),(\mu',\nu')\in~S,$ where $\delta$ denotes the Kronecker delta.
\end{enumerate}

In particular, for each nonempty subset $S' \subseteq S,$ the fusion rules imply a
   grading of $A$ as a $\mathbb Z_2$-graded algebra, i.e.,
\begin{equation}\label{dec119}
    A=\overbrace{A_{1}(a) \oplus A_{0}(a)\oplus  A_{S\setminus S'}(a)}^{\text {$(+)$-part}}) \oplus \overbrace{ A_{ S'}(a)}^{\text {$(-)$-part}}. \footnote{Fusion rules (a) and (b) are widespread, cf.~\cite{MS}, but (c) generalizes the definition of Jordan type in \cite{HRS}.}
\end{equation}

 \item An  $S$-axis  $a$ satisfying the fusion rules is of {\it
Jordan type} if
\begin{equation}
    \label{fr} A_0(a)^2 \subseteq A_0(a).
\end{equation}

\item An axis $a$ is {\it weakly primitive} if $A_1(a) = \ff a.$
 An axis $a$ is  {\it left primitive}
if $A_1(L_a) = \ff a.$ Similarly for  {\it right primitive}. An axis
which is both left and right primitive is called {\it
primitive}.

\item An axis  $a$ is {\it special}  if $a$ is weakly primitive  satisfying the fusion rules, such that for any idempotent $b,$ the subalgebra $A=\lan\lan a,b\ran\ran$ satisfies $\dim A_0(a)\le 1.$

\medskip

 \item An axis  $b$ is of {\it
Jordan type} $\eta$ if  $b$ is  an    $S$-axis of {
Jordan type} with one of the following:
\begin{enumerate}
   \item   $S^\comm(b)=\{(\eta,\eta)\}$ for some  $\eta \ne 0,1.$
   \item $A = \langle\langle a,b\rangle\rangle$,
   $S^{\comm}=\emptyset,$ $\eta\in\{0,1\},\ \dim A_{\eta}(b)=2,$ and $\dim A_{1-\eta}(b)=1.$
\end{enumerate}

\item If $S^\comm(b) = \emptyset$ and
\begin{enumerate}
    \item
if $b$ is weakly primitive then we say that  $b$ is of {\it
Jordan type}~$\emptyset.$

\item if $A = \langle\langle a,b\rangle\rangle$, $\dim A_0(b)=2=\dim A_1(b),$ then we say that  $b$ is of {\it
Jordan type}~$\{0,1\}.$
\end{enumerate}

\medskip


\item For a given axis $a\in A,$   any element $y\in A$ has a unique decomposition  $y =    y_1 +y_0 +  y_{ S^\comm(a)} + y_{S^\dagger(a)}, $ where  $y_1\in A_1(a)$ and  $y_0\in A_0(a); $  
when $a$ is weakly primitive we  write $y_1 = \gvp_a(y) a$ where $\gvp_a(y) \in \ff.$
\end{enumerate}
\end{Defsnot}

    \begin{lemma}
        If $S' \subseteq S$ then $A_1 (a)\oplus A_0 (a) \oplus A_{S'}(a)$ is a subalgebra of $A.$
    \end{lemma}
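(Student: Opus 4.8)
The plan is to reduce everything to the three fusion rules of \ref{not1}(5). Write $B := A_1(a) + A_0(a)$, which is a subalgebra by fusion rule (a), and note that the claimed subspace is exactly $B \oplus A_{S'}(a)$, a genuine direct sum because it is a sub-sum of the eigenspace decomposition \eqref{dec17}. Since each summand is closed under the subspace structure, it suffices to verify closure under multiplication. So I would take arbitrary elements $x = x_B + x_{S'}$ and $y = y_B + y_{S'}$, with $x_B, y_B \in B$ and $x_{S'}, y_{S'} \in A_{S'}(a)$, and expand
\[
xy = x_B y_B + x_B y_{S'} + x_{S'} y_B + x_{S'} y_{S'}.
\]

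I would then dispatch the four terms one at a time. The term $x_B y_B$ lies in $B$ by fusion rule (a). For $x_B y_{S'}$ and $x_{S'} y_B$, I would write $y_{S'} = \sum_{(\mu,\nu) \in S'} y_{\mu,\nu}$ (and similarly for $x_{S'}$) and invoke fusion rule (b): each $A_{\mu,\nu}(a)$ with $(\mu,\nu) \in S'$ is a two-sided $B$-module, so left- or right-multiplication by $x_B$ or $y_B$ keeps each summand inside $A_{\mu,\nu}(a) \subseteq A_{S'}(a)$. Hence these two terms both lie in $A_{S'}(a)$.

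The only term calling for fusion rule (c) is the cross term $x_{S'} y_{S'}$. Expanding both factors and applying (c) to each product $x_{\mu,\nu} y_{\mu',\nu'}$, the Kronecker deltas $\delta_{\mu,\mu'}\delta_{\nu,\nu'}$ annihilate every mixed product and place each surviving product (those with $(\mu,\nu) = (\mu',\nu')$) inside $B$; summing, $x_{S'} y_{S'} \in B$. Collecting the $B$-parts $x_B y_B + x_{S'} y_{S'}$ and the $A_{S'}(a)$-parts $x_B y_{S'} + x_{S'} y_B$ then gives $xy \in B \oplus A_{S'}(a)$, as required. I expect no genuine obstacle: the entire content is that (c) forces products of \emph{distinct} eigenspaces to vanish and products of \emph{equal} eigenspaces to land in $B$, so no product can escape into an eigenspace $A_{\mu,\nu}(a)$ with $(\mu,\nu) \in S \setminus S'$, which is precisely what closure of $B \oplus A_{S'}(a)$ demands.
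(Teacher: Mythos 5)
Your proof is correct and is exactly the argument the paper intends: the paper's own proof consists of the single phrase ``By the fusion rules,'' and your four-term expansion (rule (a) for $x_B y_B$, rule (b) for the mixed terms, rule (c) for $x_{S'}y_{S'}$) is precisely the routine verification being left implicit there. No differences to report.
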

\begin{proof}
    By the fusion rules.
\end{proof}

\begin{Def}$ $
\begin{enumerate}
\item
 $X\subseteq A$ denotes a set of  axes. $\langle\langle X \rangle\rangle$ denotes the
subalgebra of $A$ generated by $X$.
If $A= \langle\langle X \rangle\rangle,$ we say that $X$ {\it generates} $A$;
 if   $|X|=n$  we say that $A$ is
{\it $n$-generated}.

    \item
    An axial algebra $A$ is a {\it
    (weak) PAJ}, if $A= \lan\lan X \ran\ran,$  where $X$ is a set of
(weakly) primitive axes of  Jordan type.
\begin{enumerate}
 \item
 $ A$ is of {\it Jordan type $\eta$} if each $a\in X$ has Jordan type $\eta$.

   \item
A commutative {PAJ}  is called a {\it CPAJ}.
\end{enumerate}

\item A set $X$ of axes is {\it homogeneous} if $S^{\circ}(b)=S^{\circ}(c)$ or $S^{\circ}(b)=\tilde S^{\circ}(c),$ for $b,c\in X.$
\end{enumerate}
    \end{Def}

\subsection{Review of the Classification of 2-generated weak PAJ's, \\ from~\cite{RoSe5}}$ $

\cite[Theorem A]{RoSe5} characterized all  2-generated-CPAJ's $\lan\lan a,b\ran\ran$.

\begin{example}\label{class}
Next, we list the possibilities for  $A= \lan \lan a,b \ran\ran$,  as classified in~\cite{RoSe5},  where $a$ is a special axis,   and $b$ is an axis satisfying the fusion rules.  
(It follow by \cite[Lemma~3.7]{RoSe5} that $\dim A_{\mu,\nu}(a)\le 1$ for all $(\mu,\nu)\in S(a).$)

  In   (iii)--(vi) below, both $a$ and $b$ are of Jordan type $\emptyset,$ $\{\half\},$ or $\{2\}$. In the last three examples, $b$ is not weakly primitive. We write $S(a) $  and $S(b)$ to avoid ambiguity.
\begin{enumerate}\eroman
    \item  The list of  \cite[Theorem~1.1]{HRS}.
  We call these axial algebras {\it HRS algebras}, which are commutative.  In each case, $S(a)$ and $S(b)$  have order $\le 1.$ Both axes are primitive of Jordan type. If $S(a)=\{\eta\}, \eta\notin\{0,1\},$    then necessarily   $S(b)= \{\eta\}$  or  $S(b)= \{1-\eta\}$.

\item
A 4-dimensional CPAJ, cf.~\cite[Example~4.5]{RoSe5}.
Let
\[
A=\ff a+\ff b_0+\ff b_{2,2}+\ff b_{\half,\half}, \qquad b=a+b_0+b_{2,2}+b_{\half,\half},
\]
\[
\textstyle{b_{\half,\half}^2=0,\quad b_{2,2}^2=-\half b_0,\quad b_{\half,\half}b_{2,2}=0.}
\]
\[
\textstyle{b_0^2=\frac{3}{2}b_0,\qquad  b_0b_{\half,\half}=0,\qquad b_0b_{2,2}=-\frac{3}{2}b_{2,2}.}
\]
\[
\textstyle{a=b+a_0+a_{2,2}+a_{\half,\half},\quad a_0\in A_0(b),\quad a_{\eta,\eta}\in A_{\eta}(b).}
\]
 Hence the axis $b$ is primitive and $S(a)=S(b)=\{(\half,\half),(2,2)\}.$
 \smallskip

\item
\cite[Example~4.13]{RoSe5} $S^{\circ}(a)\subseteq\{(\mu,\nu)\in\ff\times\ff: \mu+\nu=1\},$ with $(\half,\half)\in S^{\circ}(a),$ and with  $S^{\dagger}(a)\ne\emptyset.$
Set
\[A=\ff a+\ff b_0+\sum_{(\mu,\nu)\in S^{\circ}(a)}\ff b_{\mu,\nu}
\]
with $b_0\ne 0$, and
\[
\textstyle{b=a+b_0+\sum_{(\mu,\nu)\in S^{\circ}}b_{\mu,\nu},\quad
b_{\mu,\nu}^2=0, \forall (\mu,\nu)\in S^{\dagger}(a),\quad b_{\half,\half}^2= b_0,}
\]
\[
b_{\mu,\nu}b_{\mu',\nu'}=0,\text{ for }(\mu,\nu)\ne (\mu',\nu'),
\]
\[
\textstyle{b_0^2=0, \quad b_0b_{\mu,\nu}=b_{\mu,\nu}b_0=0,\ \forall (\mu,\nu)\in S^{\circ}(a).}
\] $A$ is a PAJ of type $\half.$
Note that  $\ff a+\ff b_0+\ff b_{\half,\half}=B(\half,1),$ in the notation of \cite{HRS}. Also, $b_0 A = A b_0 =0.$
  \smallskip

  \item \cite[Example~3.12(2)]{RoSe5}    $S^{\circ}(a)\subseteq\{(\mu,\nu)\in\ff\times\ff: \mu+\nu=1\},$   $ A=\ff a+ \sum_{(\mu,\nu)\in S} \ff b_{\mu,\nu},$   and $ S^{\dagger}(a) \ne \emptyset,$
         where  $b = a +\sum_{(\mu,\nu)\in S^{\circ}(a)} b_{\mu,\nu},$ and $b_{\mu,\nu}^2 = 0 $ for each $(\mu,\nu)\in S^{\circ}(a)$.
         $Z:=\sum \ff b_{\mu,\nu}$ is a square zero ideal in~$A.$

When $|S^{\circ}(a)|=1$ we have  \cite[Example~2.2]{RoSe5} in the case of a 2-generated algebra.
 \smallskip

         \item  The ``$S$-exceptional  algebra,'' \cite[Example~3.18]{RoSe5}
Suppose  $  S^{\circ}(a)\subseteq \{ (\mu,\nu) \in \ff \times \ff :   \mu+\nu =1\}$ with $ S^\dag(a) \ne\emptyset $, and let $A=\ff a + \ff b_0+ \sum_ {(\mu,\nu)\in S^{\circ}(a)} \ff b_{\mu,\nu},$ with $b_0\ne 0.$ We define  multiplication on~$A$ according to the following   rules,  with the sums taken over all $ (\mu,\nu)\in S(a)  $, where $b =  b_0+ \sum_ {(\mu,\nu)\in S(a)}  b_{\mu,\nu}$:


\begin{equation}\label{eq112a}
   a b_{\mu,\nu} =  \mu b_{\mu,\nu} = b_{\mu,\nu} b, \qquad  b_{\mu,\nu}a =  \nu b_{\mu,\nu} = bb_{\mu,\nu}, \qquad  b_0^2 =b_0,
\end{equation}
\begin{equation}\label{eq113a}
    b_{\mu,\nu}  b_{\mu',\nu'}= 0, \quad \forall (\mu,\nu), (\mu',\nu') \in S(a).
\end{equation}

When $ (\half,\half)\in S(a)$, $A$ is of Jordan type $\half$; otherwise $A$ is    of Jordan type $\emptyset$.

\smallskip

 \item  \cite[Example~4.12]{RoSe5}
 Let $T\subseteq\{(\mu,\nu)\in\ff\times\ff:\mu+\nu=1\},$ and let $S^{\circ}(a)=T\cup\{(2,2)\},$ with $S^{\dagger}(a)\ne\emptyset.$ Set
\[A=\ff a+\ff b_0+\ff b_{2,2}+\sum_{(\mu,\nu)\in T}\ff b_{\mu,\nu},
\]
with $b_0\ne 0,$ and
\[
\textstyle{b=a+b_0+\sum_{(\mu,\nu)\in S^{\circ}}b_{\mu,\nu},\quad
b_{\mu,\nu}^2=0, \forall (\mu,\nu)\in T,\quad b_{2,2}^2=-\half b_0.}
\]
\[
b_{\mu,\nu}b_{\mu',\nu'}=0,\text{ for }(\mu,\nu)\ne (\mu',\nu').
\]
\[
\textstyle{b_0^2=\frac{3}{2} b_0,\quad b_0b_{2,2}=b_{2,2}b_0=-\frac{3}{2}b_{2,2}, \quad b_0b_{\mu,\nu}=b_{\mu,\nu}b_0=0,\ \forall (\mu,\nu)\in T.}
\]
Clearly $bb_{\mu,\nu}=\mu b_{\mu,\nu},$ and $b_{\mu,\nu}b=\nu b_{\mu,\nu},\ \forall (\mu,\nu)\in T.$

$a$ and $b$ act symmetrically; both $a$ and $b$ are of Jordan type   2.
\smallskip

\item \cite[Example~4.9]{RoSe5}
 $A=\ff a+\ff b_0+\sum_{(\mu,\nu)\in S^{\circ}}(a)\ff b_{\mu,\nu},$ with $\mu+\nu=1,$ for all  $(\mu,\nu)\in S^{\circ}(a),$ and $b_0\ne 0.$
Let
\[
b=a+b_0+\sum_{(\mu,\nu)\in S^{\circ}}b_{\mu,\nu}.
\]
\[
b_{\mu,\mu}b_{\mu',\nu'}=0,\quad\text{for all }(\mu,\nu), (\mu',\nu')\in S^{\circ}.
\]
\[
b_0^2=b_0,\quad b_0b_{\mu,\nu}=b_{\mu,\nu}b_0=0,\ \forall (\mu,\nu)\in S^{\circ}.
\]
$b$ is an  $\tilde S$-axis. $b$ is not weakly primitive  since $\dim A_1(b)=2,$ and $b$ is of Jordan type   1. We get the commutative
  \cite[Example~4.1]{RoSe5} (which is not an HRS algebra) when $S^\circ(a) = \{(\half,\half)\}.$
 \smallskip

\item \cite[Example~4.2]{RoSe5} Let $A=\ff a+\ff b_0+\ff b_{\mu,\mu},$ $b=\half a+b_0+b_{\mu,\mu},$
\[
\textstyle{b_{\mu,\mu}^2=\frac{1}{4\mu}a,\quad b_0^2=b_0+\frac{\mu-1}{4\mu}a,\quad b_0 b_{\mu,\mu} =\frac{1-\mu}{2}b_{\mu,\mu}.}
\]
$\dim A_0(b) =1$ and $\dim A_1(b)=2,$
so again  $b$ is not weakly primitive but is of Jordan type   1.    This is not  an HRS algebra, so is not a PAJ in the sense of  \cite{RoSe4}.
\smallskip

\item \cite[Example~4.3]{RoSe5}
 $A = \ff a+ \ff b_0+ \ff b_{\mu,\mu} +\ff b_{\nu,\nu},$ where $\mu,\nu\in\ff\setminus\{0,1\}, \mu\ne\nu,$ so $A$ is $4$-dimensional.  $A$ is commutative, and multiplication in $A$ is defined as follows.
\[
a^2=a,\quad ab_0=0,\quad ab_{\mu,\mu}=\mu b_{\mu,\mu},\quad ab_{\nu,\nu}=\nu b_{\nu,\nu}.
\]
To define $b_{\mu,\mu}^2$ and $b_{\nu,\nu}^2$ we use the following equation:
\[
\textstyle{b_{\mu,\mu}^2+b_{\nu,\nu}^2=\frac{1}{4}a+\half b_0,\quad\mu b_{\mu,\mu}^2+\nu b_{\nu,\nu}^2=\frac{1}{4} a.}
\]
We also let
\[
\textstyle{b_0^2=\half b_0,\quad b_0b_{\mu,\mu}=\frac{1-\mu}{2}b_{\mu,\mu},\quad b_0b_{\nu,\nu}=\frac{1-\nu}{2}b_{\nu,\nu},\quad b_{\mu,\mu}b_{\nu,\nu}=0.}
\]
$\dim A_0(b) = \dim A_1(b)=2,$  so $b$ is not weakly primitive but is  of Jordan type   $\{0,1\}$.
\end{enumerate}

\end{example}

These are the only possibilities, by the main theorems A and B of \cite{RoSe5}.

\begin{remark}\label{conc}
    In conclusion, in all of Examples \ref{class},
    \begin{enumerate}\eroman
         \item   $a$ is of Jordan type (i.e., \eqref{fr} holds) in all examples except (viii), in which case $b$ is not weakly primitive.

               \item  $\mu+\nu = 1$ and
 $A_{\mu,\nu}(a)^2 =0$ for all $(\mu,\nu)\in S^\dagger(a).$
           \item By symmetry, and in view of (i), when $b$ is weakly primitive, then $b$ is of Jordan type.
            \item $S^\dagger(b) = \tilde S^\dagger(a)$ or   $S^\dagger(b) =   S^\dagger(a)$.
         \item There are three commutative examples in which $\dim A_1(b)=2,$
         namely (vii), and (viii), and (ix).
    \end{enumerate}
\end{remark}

\begin{remark}\label{conca}
If $a$ is a special axis, and $b$ is an axis that commutes with $a$ then $A=\lan\lan a,b\ran\ran$ is commutative, by \cite[Lemma~3.7(ii)]{RoSe5}.
\end{remark}

\begin{cor}\label{2genJ}$ $
     The only  HRS algebras which extend to  weak  PAJ's which are not commutative are   $\ff,$ $\ff \times \ff,$ $B(\half,1)$ and its $2$-dimensional quotient, and $B(2,1).$
\end{cor}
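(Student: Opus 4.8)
The plan is to read the answer off the classification recalled in Example~\ref{class}, interpreting the statement that an HRS algebra $H$ \emph{extends to a noncommutative weak PAJ} as: $H$ is recovered from some noncommutative $2$-generated weak PAJ $A=\lan\lan a,b\ran\ran$ by passing to its commutative part. Concretely, by Remark~\ref{conca} the noncommutativity of $A$ forces $a$ and $b$ not to commute, i.e. $S^\dagger(a)\ne\emptyset$; deleting the off-diagonal eigenspaces $A_{\mu,\nu}(a)$ with $\mu\ne\nu$ (equivalently, imposing $L_x=R_x$) collapses $A$ onto the commutative subalgebra $A_1(a)\oplus A_0(a)\oplus A_{S^\comm(a)}(a)$, a commutative $2$-generated PAJ with $|S^\comm(a)|\le 1$, to be identified among the HRS algebras of \cite{HRS}. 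Thus the task reduces to enumerating the noncommutative members of the classification and naming the HRS algebra to which each collapses.

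First I would prune the list. By Theorems~A and~B of \cite{RoSe5}, as tabulated in Example~\ref{class}, every such $\lan\lan a,b\ran\ran$ with $a$ special is one of (i)--(ix). Cases (i) and (ii) are commutative, and in (vii), (viii) and (ix) the axis $b$ is not weakly primitive, so these are not weak PAJs. Each of (iii)--(vi) carries the standing hypothesis $S^\dagger(a)\ne\emptyset$, hence is noncommutative. Therefore the noncommutative $2$-generated weak PAJs are exactly Examples~\ref{class}(iii)--(vi), and only four collapses remain to be computed.

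Next I would work through (iii)--(vi), splitting on the Jordan type of $a$ (which is always of Jordan type here, by Remark~\ref{conc}(i)). The type-$2$ example (vi) collapses to $\ff a+\ff b_0+\ff b_{2,2}=B(2,1)$. For the type-$\half$ examples: (iii) collapses to $\ff a+\ff b_0+\ff b_{\half,\half}=B(\half,1)$, as identified there; the square-zero example (iv) with $(\half,\half)\in S^\circ(a)$ collapses to $\ff a+\ff b_{\half,\half}$ with $b_{\half,\half}^2=0$, the two-dimensional quotient of $B(\half,1)$; and the $S$-exceptional example (v) with $(\half,\half)\in S^\circ(a)$ collapses to a three-dimensional type-$\half$ algebra that must be matched against the \cite{HRS} list. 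The remaining type-$\emptyset$ subcases collapse to $\ff a\cong\ff$ (no $b_0$ present, as in (iv)) or to the orthogonal-idempotent algebra $\ff a\oplus\ff b_0\cong\ff\times\ff$ (as in (v)). Once the matchings are confirmed, the distinct outputs are exactly $\ff$, $\ff\times\ff$, $B(\half,1)$ and its two-dimensional quotient, and $B(2,1)$, and each is realized, giving both inclusions.

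The step I expect to be the main obstacle is precisely this final matching, in two respects. First, one must verify that each three-dimensional type-$\half$ collapse really does fall among the listed algebras; the delicate instance is (v), whose collapse carries an idempotent $b_0$ in place of the square-zero one of (iii), so its structure constants (radical, Frobenius form) must be compared carefully against those of $B(\half,1)$ and its quotient. Second, one must prove the completeness half, that no other HRS algebra occurs: this follows from the classification itself, which in its noncommutative cases realizes only the diagonal types $\half$ and $2$ (besides the degenerate $\emptyset$ and $\{0,1\}$); Remark~\ref{conc} supplies the rigidity---$\mu+\nu=1$ and $A_{\mu,\nu}(a)^2=0$ on every off-diagonal eigenspace, together with $a$ being of Jordan type---underlying this restriction, and thereby excludes the HRS algebras of all other Jordan types, such as $3C(\eta)$ for generic $\eta$.
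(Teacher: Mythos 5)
Your overall strategy---reading the corollary off the classification recalled in Example~\ref{class}---is exactly the paper's implicit proof: the corollary is stated there with no separate argument, as a direct inspection of the list together with Remark~\ref{conc}, and your pruning to cases (iii)--(vi) and your identifications in (iii), (iv), (vi) and in the type-$\emptyset$ subcases agree with that reading.

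However, the step you yourself flag as ``the main obstacle'' is a genuine gap, and it does not close the way you predict. Carry out the collapse of Example~\ref{class}(v) when $(\half,\half)\in S^{\circ}(a)$: since $b=b_0+\sum b_{\mu,\nu}$ with $b_{\mu,\nu}b_{\mu',\nu'}=0$, the rules $b_{\mu,\nu}b=\mu b_{\mu,\nu}$ and $bb_{\mu,\nu}=\nu b_{\mu,\nu}$ force $b_0b_{\half,\half}=b_{\half,\half}b_0=\half b_{\half,\half}$, so the commutative part is $\ff a\oplus\ff b_0\oplus\ff b_{\half,\half}$ with $a,b_0$ \emph{orthogonal idempotents}, $b_{\half,\half}^2=0$, and both $a$ and $b_0$ acting by $\half$ on $b_{\half,\half}$. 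This \emph{is} an HRS algebra: it is generated by the two primitive axes $a$ and $c:=b_0+b_{\half,\half}$, both of Jordan type $\half$ (one checks $A_1(c)=\ff c$, $A_0(c)=\ff(a-b_{\half,\half})$, $A_{\half}(c)=\ff b_{\half,\half}$ and the fusion rules), with $\gvp_a(c)=0$. But it is isomorphic to nothing on the claimed list: it is unital (identity $a+b_0$) with zero annihilator, whereas $B(\half,1)$ of Example~\ref{class}(iii) has no identity and satisfies $b_0A=Ab_0=0$ with $b_0\ne 0$; dimension or commutativity rules out the other candidates. So under your interpretation of ``extends to'' (commutative collapse, equivalently the subalgebra generated by commuting axes of the ambient algebra), the output is not the stated list, and your closing claim ``once the matchings are confirmed, the distinct outputs are exactly\ldots'' cannot be completed. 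To recover the corollary one must read ``extends to'' more restrictively---for instance, counting only HRS subalgebras $\lan\lan x,y\ran\ran$ with $x,y$ commuting axes in $\Cl(X)$: in (v) every Miyamoto conjugate of $b$ retains nonzero components on all of $S^{\dagger}(a)\ne\emptyset$, so the axis $b_0+b_{\half,\half}$ never occurs, and the commuting pairs $b_0\pm b_{\half,\half}+u$ (common $S^{\dagger}$-part $u$) generate only the two-dimensional quotient $\ff(b_0+u)\oplus\ff b_{\half,\half}$. Your proposal neither performs this computation nor pins down the notion of extension, and the one case it leaves open is precisely the case where the argument breaks.
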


 \begin{remark}
      The algebras of Examples \ref{class}  all are not isomorphic one to the other.  Indeed, the 2-dimensional algebras are all HRS or as in (iv) with $|S^\circ| = 1$.  We may eliminate the commutative 3-dimensional algebras, since the only commutative non-HRS algebras are as in (vi) or (viii).
      The noncommutative algebras (iv), (v), and (vi) are clearly different from each other since $S^\comm (a)$ differ, and these algebras are different from the rest of the list. One axis of  (vii)  is not weakly primitive, differentiating it from (iv) and (v).  (viii) is 3-dimensional commutative but not HRS, since one of its axes is not primitive.

      This leaves (ii) and (ix). We show that for $\mu=\half$ and $\nu=2,$ these examples are distinct.  Towards this end, suppose on the contrary that they are the same.  Then change the name of  $b$ of example (ix) to $c.$  Let $A$ be the algebra of example (ix).  $A$ contains the axes $a,b,c.$  Since $\dim A_{\gl}(a)=1,$ for $\gl\in\{1,0\}$ and $\dim A_{\eta,\eta}(a)=1,$ for $\eta\in\{\half,2\},$ we must have $\ff b_0=\ff c_0$ and $\ff b_{\half,\half}=\ff c_{\half,\half}.$ However $b_0b_{\half,\half}=0$ (by example (ii)) whereas $b_0c_{\half,\half}=\frac{1-\half}{2}c_{\half,\half}$ (by example (ix)); since $c_{\half,\half}\ne 0,$ this is a contradiction.
    \end{remark}

\subsection{Miyamoto involutions}\label{Miyfhol01}$ $

\medskip
We modify the standard method of generating new axes. Let $a\in A$ be  any ${S}$-axis.
For $ {S'\subseteq S} , $ define the {\it Miyamoto map of $A$} $$\gt_{a;S'}: y_1+y_0+ \sum _{  (\mu,\nu)\in S\setminus S'} {y_{\mu,\nu}}\, -\sum _{(\mu,\nu)\in S'}y_{\mu,\nu}.$$
 Note  that  $\gt_{a;S'}(y)=y$  iff $y_{\mu,\nu}=0$ for all $(\mu,\nu)\in S'$.

\begin{lemma}\label{dec7} For any $S$-axis $a$ and any $y \in A$,
\begin{enumerate}\eroman
\item $y_0, y_1, y_{\mu,\nu}\in \langle\langle a,y
\rangle\rangle$, for each $(\mu,\nu)$ in $S.$

 \item If $\gt_{a;\{(\mu,\nu)\}} (b)=b$ for all $(\mu,\nu)\in S,$  for a    weakly primitive axis $a$, and idempotent $b\ne a$, then $\dim  \langle\langle a,b\rangle\rangle =2.$ 

\end{enumerate}
\end{lemma}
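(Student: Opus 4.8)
\emph{Plan for part (i).} The plan is to realize each projection $y\mapsto y_{\mu,\nu}$ as a polynomial in the two commuting operators $L_a$ and $R_a$. Since $a$ is a $2$-sided $S$-axis, $A$ is the direct sum of the left eigenspaces of $L_a$ (with eigenvalues the finite set $S_L=S_L(a)$) and also the direct sum of the right eigenspaces of $R_a$ (with eigenvalues $S_R=S_R(a)$), and $L_aR_a=R_aL_a$. Thus $L_a,R_a$ are simultaneously diagonalizable and the Lagrange interpolation operators
\[
e_\mu=\prod_{\mu'\in S_L\setminus\{\mu\}}\frac{L_a-\mu'}{\mu-\mu'},\qquad
f_\nu=\prod_{\nu'\in S_R\setminus\{\nu\}}\frac{R_a-\nu'}{\nu-\nu'}
\]
are the spectral projections onto $A_\mu(L_a)$ and $A_\nu(R_a)$, so that $e_\mu f_\nu$ projects onto $A_{\mu,\nu}(a)$ and $y_{\mu,\nu}=e_\mu f_\nu(y)$. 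Now every monomial $L_a^{\,i}R_a^{\,j}(y)$ is obtained from $y$ by multiplying on the left and on the right by $a$ finitely many times, hence lies in the subalgebra $\lan\lan a,y\ran\ran$; since $\lan\lan a,y\ran\ran$ is a linear subspace and $e_\mu f_\nu$ is a linear combination of such monomials, $y_{\mu,\nu}\in\lan\lan a,y\ran\ran$. Taking $(\mu,\nu)=(1,1)$ and $(0,0)$ gives $y_1,y_0\in\lan\lan a,y\ran\ran$ as well.

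\emph{Plan for part (ii), reduction.} First I would feed in the Miyamoto hypothesis: by the remark following the definition of $\gt_{a;S'}$, the equality $\gt_{a;\{(\mu,\nu)\}}(b)=b$ holds iff $b_{\mu,\nu}=0$, so assuming it for \emph{all} $(\mu,\nu)\in S$ forces $b_S=0$. Hence $b=b_1+b_0$. Weak primitivity of $a$ means $A_1(a)=\ff a$, so $b_1=\gl a$ for some $\gl\in\ff$, giving $b=\gl a+b_0$ with $b_0\in A_0(a)=A_{0,0}(a)$; in particular $ab_0=L_a(b_0)=0$ and $b_0a=R_a(b_0)=0$.

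\emph{Plan for part (ii), conclusion.} Next I would use that $b$ is idempotent. Expanding $b^2=b$ with $a^2=a$ and $ab_0=b_0a=0$ yields $\gl^2 a+b_0^2=\gl a+b_0$, so $b_0^2=(\gl-\gl^2)a+b_0\in \ff a+\ff b_0$. Set $V=\ff a+\ff b_0$. The products $a\cdot a=a$, $ab_0=b_0a=0$, and $b_0^2\in V$ show $V$ is closed under multiplication, hence is a subalgebra containing $a$ and $b=\gl a+b_0$; conversely $b_0=b-\gl a\in\lan\lan a,b\ran\ran$, so $V=\lan\lan a,b\ran\ran$. Finally, we may assume $b\neq 0$ (an axis is nonzero): if $b_0=0$ then $b=\gl a$ and idempotency forces $\gl\in\{0,1\}$, i.e.\ $b\in\{0,a\}$, contrary to hypothesis; so $b_0\neq 0$, and since $a\in A_1(a)$ and $b_0\in A_0(a)$ lie in distinct eigenspaces and are both nonzero, they are linearly independent. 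Therefore $\dim\lan\lan a,b\ran\ran=\dim V=2$. The only genuinely technical point is the spectral-projection step in (i)—verifying that the joint projection is a polynomial in $L_a,R_a$ and that applying it does not leave $\lan\lan a,y\ran\ran$; part (ii) is then a short computation whose crux is that idempotency re-expresses $b_0^2$ in terms of $a$ and $b_0$, collapsing the generated algebra to dimension $2$.
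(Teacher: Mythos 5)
Your proof is correct and takes essentially the same route as the paper: for (i) the paper simply cites \cite[Lemma~1.7]{RoSe5}, whose content is exactly the argument you give (spectral projections as polynomials in the commuting operators $L_a$, $R_a$ applied to $y$, which stay inside $\lan\lan a,y\ran\ran$). For (ii) the paper's one-line proof is the same reduction $b=\ga_b a+b_0$ with $ab=ba=\ga_b a$; you carry it out in more detail, including the $b\ne 0$ degenerate case that the paper leaves implicit.
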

\begin{proof}  (i) By \cite[Lemma~1.7]{RoSe5}.


(ii)  $b = \ga_b a +b_0$. Thus $ab = \ga_b a =ba$, proving the  assertion.\qedhere

\end{proof}

 In accordance with the literature, a {\bf Miyamoto involution} is a Miyamoto map which is an algebra
automorphism.
\begin{lemma}
    Each  Miyamoto map associated to an $S$-axis $a$
satisfying the fusion rules   is a Miyamoto involution.
\end{lemma}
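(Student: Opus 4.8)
The plan is to recognize $\gt_{a;S'}$ as the sign automorphism attached to the $\mathbb{Z}_2$-grading \eqref{dec119} and then to verify that this sign map respects multiplication. Write $A_+ := A_1(a)\oplus A_0(a)\oplus A_{S\setminus S'}(a)$ for the $(+)$-part and $A_- := A_{S'}(a)$ for the $(-)$-part, so that by definition $\gt_{a;S'}$ acts as the identity on $A_+$ and as $-1$ on $A_-$. Since $\gt_{a;S'}$ is linear and restricts to $\pm\,\text{id}$ on each summand, it squares to the identity and is in particular a linear bijection; thus the only thing to prove is that it is an algebra homomorphism, and it will then automatically be an algebra automorphism, hence a Miyamoto involution in the sense defined above. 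For $S'=\emptyset$ the map is the identity and there is nothing to check, so I assume $S'\ne\emptyset$.

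First I would confirm that \eqref{dec119} really is a $\mathbb{Z}_2$-grading, i.e., that $A_+A_+\subseteq A_+$, $A_+A_-\subseteq A_-$, $A_-A_+\subseteq A_-$, and $A_-A_-\subseteq A_+$, using only fusion rules (a)--(c). Rules (a) and (b) give $(A_1(a)+A_0(a))(A_1(a)+A_0(a))\subseteq A_1(a)+A_0(a)$ and that each $A_{\mu,\nu}(a)$ is preserved, as a two-sided module, under multiplication by $A_1(a)+A_0(a)$; hence every product having an $A_1(a)+A_0(a)$ factor stays in the part of whichever two-sided eigenspace it started in. The crucial input is rule (c): the product $A_{\mu,\nu}(a)A_{\mu',\nu'}(a)$ lands in $A_1(a)+A_0(a)$ when $(\mu,\nu)=(\mu',\nu')$ and is $0$ otherwise. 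Consequently a product of two eigenspaces indexed in $S'$ lands in $A_1(a)+A_0(a)\subseteq A_+$ (being $0$ if the indices differ), a product of two eigenspaces indexed in $S\setminus S'$ lands in $A_+$ likewise, while a product of one eigenspace indexed in $S'$ with one indexed in $S\setminus S'$ has distinct indices and so is $0\subseteq A_-$. These are precisely the four grading inclusions.

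Given the grading, the multiplicativity of $\gt_{a;S'}$ is the standard sign-map computation: writing $x=x_++x_-$ and $y=y_++y_-$ with $x_\pm,y_\pm$ in the respective parts, one has $\gt_{a;S'}(x)\gt_{a;S'}(y)=(x_+-x_-)(y_+-y_-)=x_+y_++x_-y_--x_+y_--x_-y_+$, whereas $x_+y_+,\,x_-y_-\in A_+$ and $x_+y_-,\,x_-y_+\in A_-$ force $\gt_{a;S'}(xy)=x_+y_++x_-y_--x_+y_--x_-y_+$, agreeing term by term. Hence $\gt_{a;S'}$ is an algebra homomorphism, and being an involutive bijection it is an algebra automorphism, completing the proof. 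I expect no serious obstacle; the one point requiring care is the noncommutative bookkeeping in the grading step, namely making sure the double Kronecker delta in fusion rule (c) genuinely forces the mixed products $A_{S\setminus S'}(a)A_{S'}(a)$ and $A_{S'}(a)A_{S\setminus S'}(a)$ to vanish, which is what places those terms in the $(-)$-part vacuously and is the feature distinguishing this setting from the classical commutative Miyamoto construction.
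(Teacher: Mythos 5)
Your proof is correct and takes essentially the same approach as the paper: the paper's one-line proof simply invokes the $\mathbb{Z}_2$-grading \eqref{dec119}, which is exactly the sign-automorphism argument you carry out in full. The only difference is that you explicitly verify from fusion rules (a)--(c) that \eqref{dec119} really is a grading, a step the paper treats as already established in its definition of the fusion rules.
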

\begin{proof}
  $ \gt_{a;S'}$ is an   automorphism, because of the 2-grading in \eqref{dec119}.
\end{proof}

When $a$ does not satisfy the fusion rules,
$\gt_{a;S'}(b)$  need not be an idempotent, for an axis $b$!

 Write $\Aut(A)$ for the automorphisms of an algebra $A$.
For $\gr\in\Aut(A)$ we write $y^\gr$ for $\gr
(y)$, for any $y\in A$.

\begin{lemma}\label{abovename}
 Let $a$ be a weakly primitive  $S$-axis satisfying the fusion rules, and take $S'\subseteq S,$ and $\gr\in\Aut(A).$  Then \begin{enumerate} \eroman
    \item The axis $a^\rho $ has the same pre-Jordan  type $S$ as $a$;  $a^\rho $ is special if and only if $a$ is special.

 \item
$\gt_{a^{\gr};S'}=\gt_{a;S'}^{\gr}:=\gr^{-1}\gt_{a;S'}\gr.$
\end{enumerate}
\end{lemma}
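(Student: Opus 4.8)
The plan is to reduce everything to the single transport identity
$A_{\mu,\nu}(a^{\gr}) = \gr\bigl(A_{\mu,\nu}(a)\bigr)$ for all $(\mu,\nu)$, after which both parts are bookkeeping. First I would note that $a^{\gr}=\gr(a)$ is again idempotent, since $(a^{\gr})^2 = \gr(a)\gr(a) = \gr(a^2) = \gr(a) = a^{\gr}$. Because $\gr$ is multiplicative, $L_{a^{\gr}}(\gr(w)) = \gr(a)\gr(w) = \gr(aw) = \gr\bigl(L_a(w)\bigr)$, so $L_{a^{\gr}} = \gr L_a \gr^{-1}$, and likewise $R_{a^{\gr}} = \gr R_a \gr^{-1}$. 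Conjugate operators have the same spectrum and $\gr$-related eigenspaces, giving $A_\mu(L_{a^{\gr}}) = \gr(A_\mu(L_a))$ and $A_\nu(R_{a^{\gr}}) = \gr(A_\nu(R_a))$, hence $A_{\mu,\nu}(a^{\gr}) = \gr(A_{\mu,\nu}(a))$.

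From this identity I read off the first assertion of (i). Applying $\gr$ to the decomposition \eqref{dec18} shows $A = \bigoplus \gr(A_{\mu,\nu}(a)) = \bigoplus A_{\mu,\nu}(a^{\gr})$, so $a^{\gr}$ is a two-sided axis; moreover $L_{a^{\gr}}R_{a^{\gr}} = \gr L_a R_a \gr^{-1} = \gr R_a L_a \gr^{-1} = R_{a^{\gr}}L_{a^{\gr}}$. Since $\gr$ is a bijection, the nonzero left and right eigenspaces of $a$ and $a^{\gr}$ correspond, so $S_L(a^{\gr})=S_L(a)$ and $S_R(a^{\gr})=S_R(a)$, whence $S(a^{\gr})=S(a)$, i.e.\ the same pre-Jordan type $S$. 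Weak primitivity transfers because $A_1(a^{\gr}) = \gr(A_1(a)) = \gr(\ff a) = \ff a^{\gr}$, and each fusion rule transfers verbatim because $\gr$ is an algebra isomorphism carrying the summands $A_{\mu,\nu}(a)$ onto $A_{\mu,\nu}(a^{\gr})$; e.g.\ $A_{\mu,\nu}(a^{\gr})A_{\mu',\nu'}(a^{\gr}) = \gr\bigl(A_{\mu,\nu}(a)A_{\mu',\nu'}(a)\bigr)$.

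For the ``special'' equivalence the point is that specialness is quantified over \emph{all} idempotents, so I would transport the test idempotent across $\gr$. Assume $a$ is special and let $b$ be any idempotent; put $b' = \gr^{-1}(b)$, again an idempotent. Then $\gr$ restricts to an algebra isomorphism $\lan\lan a,b'\ran\ran \to \lan\lan \gr(a),\gr(b')\ran\ran = \lan\lan a^{\gr},b\ran\ran$ carrying the $0$-eigenspace of $a$ (inside $\lan\lan a,b'\ran\ran$, which by Lemma~\ref{dec7}(i) is where the projections live) onto that of $a^{\gr}$ inside $\lan\lan a^{\gr},b\ran\ran$. Hence $\dim A_0(a^{\gr}) = \dim A_0(a) \le 1$, so $a^{\gr}$ is special; the converse follows by replacing $\gr$ with $\gr^{-1}$ and using $a = (a^{\gr})^{\gr^{-1}}$.

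Finally, for (ii) I would check that the two linear maps $\gt_{a^{\gr};S'}$ and $\gt_{a;S'}^{\gr} = \gr^{-1}\gt_{a;S'}\gr$ agree on each summand $A_{\mu,\nu}(a^{\gr}) = \gr(A_{\mu,\nu}(a))$, and these summands span $A$. Fix $(\mu,\nu)$ and write $u = \gr(v)$ with $v \in A_{\mu,\nu}(a)$. On one hand, $\gt_{a^{\gr};S'}(u)$ equals $-u$ if $(\mu,\nu)\in S'$ and $u$ otherwise (in particular $u$ on the $(1,1)$- and $(0,0)$-parts, which lie outside $S\supseteq S'$). On the other hand, reading $\gr^{-1}\gt_{a;S'}\gr$ in the diagrammatic order fixed by the convention $y^{\gr}=\gr(y)$, i.e.\ first $\gr^{-1}$, then $\gt_{a;S'}$, then $\gr$, it sends $u=\gr(v)$ to $v\in A_{\mu,\nu}(a)$, then to $\pm v$ by the same $S'$-rule, then to $\pm u$; so the maps coincide. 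The main obstacle is really just establishing the transport identity cleanly and keeping the conjugation/composition order consistent with the exponential notation; once $A_{\mu,\nu}(a^{\gr}) = \gr(A_{\mu,\nu}(a))$ is in hand, both parts are routine.
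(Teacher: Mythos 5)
Your proposal is correct and follows essentially the same route as the paper: establish the transport identity $x\in A_{\mu,\nu}(a^{\gr})\iff x^{\gr^{-1}}\in A_{\mu,\nu}(a)$ (the paper's $(*)$) and then verify $\gt_{a^{\gr};S'}=\gt_{a;S'}^{\gr}$ by checking the $\pm 1$ action eigenspace-by-eigenspace, with the same reading of the conjugation order. You merely spell out details the paper leaves implicit, namely the operator-conjugation proof of $(*)$ and the transport of the test idempotent for the ``special'' equivalence, both of which are sound.
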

\begin{proof} (i)
  Note  for
$(\mu,\nu)\in S,$ that
\[\tag{$*$}
x\in A_{\mu,\nu}(a^{\gr}) \iff x^{\gr^{-1}}\in A_{\mu,\nu}(a).
\]
\medskip

\noindent
(ii)
Let $\tau'=\tau_{a^{\gr};S'}$ and $\tau=\tau_{a;S'}.$ We show that $x^{\tau'}=\varepsilon x\implies x^{\tau^{\gr}}=\varepsilon x,$ for $\varepsilon\in\{1,-1\}.$  Clearly this implies $\tau'=\tau^{\gr}.$

Let $x\in A_1(a^{\gr})\cup A_0(a^{\gr})\cup A_{S\setminus S'}(a^{\gr}),$ (so that $x^{\tau'}=x$). Then by $(*),$ $x^{\gr^{-1}\tau}=x^{\gr^{-1}},$ so $x^{\tau^{\gr}}=x.$
 Also if $x\in A_{S'}(a^{\gr}),$ (i.e., $x^{\tau'}=-x$), then, by $(*)$, $x^{\gr^{-1}}\in A_{S'}(a),$ so $x^{\gr^{-1}\tau}=-x^{\gr^{-1}},$ so $x^{\tau^{\gr}}=x^{\gr^{-1}\tau\gr}=-x^{\gr^{-1}\gr}=-x.$
\end{proof}

\begin{example}
      We make the following observations on  Example \ref{class}(ii), our example of a 4-dimensional CPAJ,
    \begin{enumerate}\eroman
\item
$ \ff b_{\half,\half} $ is a square-zero ideal of $A$.

\item
$A = \langle\langle b, b^\gt_{a;S}
\rangle\rangle$ where $S =\{ \half, 2\}$, unless $\charc \ \ff=3.$ Indeed  let $B = \langle\langle b, b^\gt_{a;S'}
\rangle\rangle.$   Then $b_{2,2}+ b_{\half,\half}\in B,$
as is $b_0 +  a,$ so $-\half b_0 =(b_{2,2}+ b_{\half,\half})^2   \in B,$ implying $b_0 \in B,$ and $-\frac{3}{2}b_{2,2} = b_0(b_{2,2}+ b_{\half,\half})\in B,$ implying $B = A$.

\item
When $\charc\ \ff=3,$ $\ff b_0$ is an annihilating ideal of $A,$ and the image $(\ff \bar b_{2,2}+ \ff \bar b_{\half,\half} )/\ff b_0 $  is a square-zero ideal of $A/\ff b_0.$
\end{enumerate}
\end{example}

Given a set of axes $X$ satisfying the fusion rules, define $\mathcal{G}(X)$ to be the group  of automorphisms generated by  all  Miyamoto involutions  associated with the axes in $X$, and $\Miy(X)=\{ \gt(x): x\in X, \, \gt \in \mathcal{G}(X)\}$.

\begin{thm}[generalizing {\cite[Theorem 2.3]{RoSe4}}]\label{Kmt}
Suppose $A$ is an axial algebra generated by a  set~$X$ of weakly
primitive axes satisfying the fusion rules.
\begin{enumerate}\eroman
\item  $A$ is spanned by the set~$\Miy(X)$.

\item
If $W\subseteq A$ is a subspace containing $X$ such that $xv\in W$ and
 $vx\in W$ for all $v\in W$ and $x\in X,$ then $W=A.$

\item
Let $a\in A$ be an $S$-axis.  Then  for any $(\mu,\nu)\in S$ we have
\[
A_{\mu,\nu}(a)=\sum\{\ff b_{\mu,\nu} : b\in \Miy(X)\}.
\]
For example,
$$A_{0}(a)= \sum \{ \ff b_0 : b \in \Miy(X)\}.$$
\end{enumerate}

\item $A_{\mu,\nu}^2 = \sum\{\ff b_{\mu,\nu}  c_{\mu,\nu} : b,c\in \Miy(X)\} $
\end{thm}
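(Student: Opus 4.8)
The plan is to deduce this equality directly from part (iii) of the theorem together with the bilinearity of multiplication; essentially all of the substantive work has already been done. First I would recall the spanning set furnished by part (iii),
\[
A_{\mu,\nu}(a)=\sum\{\ff b_{\mu,\nu} : b\in \Miy(X)\},
\]
and note that, by definition, $A_{\mu,\nu}^2$ is the $\ff$-linear span of all products $xy$ with $x,y\in A_{\mu,\nu}(a)$.

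For the inclusion $\subseteq$, I would take arbitrary $x,y\in A_{\mu,\nu}(a)$ and, using the displayed spanning set, write $x=\sum_i\gl_i (b_i)_{\mu,\nu}$ and $y=\sum_j\gd_j (c_j)_{\mu,\nu}$ with $b_i,c_j\in\Miy(X)$ and $\gl_i,\gd_j\in\ff$. Bilinearity of the product then yields $xy=\sum_{i,j}\gl_i\gd_j\,(b_i)_{\mu,\nu}(c_j)_{\mu,\nu}$, which lies in $\sum\{\ff b_{\mu,\nu}c_{\mu,\nu} : b,c\in\Miy(X)\}$. Since this holds for every generating product $xy$ of $A_{\mu,\nu}^2$, the inclusion follows. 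The reverse inclusion $\supseteq$ is immediate, since each $b_{\mu,\nu}$ and each $c_{\mu,\nu}$ is by definition an element of $A_{\mu,\nu}(a)$, so the product $b_{\mu,\nu}c_{\mu,\nu}$ belongs to $A_{\mu,\nu}^2$.

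I do not expect a genuine obstacle in this last statement: it is a formal consequence of part (iii), whose proof (identifying the projections of the Miyamoto orbit as a spanning set for the two-sided eigenspace) is where the real content lies. The only conceptual remark worth adding is that, when $a$ satisfies the fusion rules, rule (c) forces $b_{\mu,\nu}c_{\mu,\nu}\in A_1(a)\oplus A_0(a)$, so that $A_{\mu,\nu}^2$ sits inside the $(+)$-part of the grading in \eqref{dec119}; but this localization is not needed for the equality itself and only serves to explain why the square is a comparatively small subspace.
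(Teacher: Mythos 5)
Your derivation of the final item from part (iii) is correct as far as it goes: once one knows $A_{\mu,\nu}(a)=\sum\{\ff b_{\mu,\nu} : b\in \Miy(X)\}$, the equality $A_{\mu,\nu}^2 = \sum\{\ff\, b_{\mu,\nu}c_{\mu,\nu} : b,c\in \Miy(X)\}$ is indeed a purely formal consequence of bilinearity, with the reverse inclusion immediate because each $b_{\mu,\nu}$ lies in $A_{\mu,\nu}(a)$. The paper gives no separate argument for this item, so on this clause you have filled in exactly what the authors leave implicit, and your side remark about fusion rule (c) locating $A_{\mu,\nu}^2$ inside $A_1(a)+A_0(a)$ is accurate and appropriately flagged as not needed.

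The gap is that the statement is the whole theorem, and you prove none of (i), (ii), (iii) --- which, as you yourself say, is ``where the real content lies.'' Part (iii) cannot simply be assumed; it is the substance of the result, and your argument is conditional on it. The paper's proof handles this content by adapting the proof of \cite[Theorem 2.3]{RoSe4}: the key computation is that for $a,b\in \Miy(X)$ one has $ab = \ga_b a + \sum_{(\mu,\nu)\in S(a)} \frac{\mu}{2}\bigl(b - \gt_{a;\{(\mu,\nu)\}}(b)\bigr)$, and since each $\gt_{a;\{(\mu,\nu)\}}(b)$ again lies in $\Miy(X)$ (Miyamoto maps of axes in $\Miy(X)$ are conjugates, by Lemma~\ref{abovename}(ii), of Miyamoto maps of axes in $X$, hence lie in $\calg(X)$), the span of $\Miy(X)$ is closed under multiplication and so equals $A$; this gives (i), and then (ii) and (iii) follow by repeating the remainder of the cited proof with $(\mu,\nu)$ in place of $(\gl,\gd)$. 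Without an argument of this kind --- establishing that the Miyamoto orbit spans $A$ and that the projections $b_{\mu,\nu}$ of its members span each two-sided eigenspace --- your proposal establishes only the easiest clause of the theorem, conditionally on the rest.
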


\begin{proof} The proof is exactly as in the proof of \cite[Theorem 2.3]{RoSe4}. For example, we get (i) by noting   that $ab = \ga
_b a + \sum _{(\mu,\nu)\in S(a)} \frac{\mu}2 (b -  \gt_{a;\mu,\nu}(b))$ for any $a,b\in \Miy(X)$. Then repeat the remainder of the
proof of  {\cite[Theorem 2.3]{RoSe4}}, using $(\mu,\nu)$ instead of $(\gl,\gd).$
\end{proof}

\begin{cor}
Suppose $A=\lan\lan X\ran\ran,$ where each $x\in X$ is an axis  satisfying $S^{\dagger}(x)=\emptyset.$ Then $A$ is commutative.
\end{cor}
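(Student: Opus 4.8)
The plan is to produce a spanning set of $A$ consisting of elements that commute with \emph{every} element of $A$, and then conclude commutativity by bilinearity of the product. The starting point is the observation recorded in Definition~\ref{not1}(4): for each $x\in X$, the hypothesis $S^{\dagger}(x)=\emptyset$ means that in the decomposition \eqref{dec18} all off-diagonal eigenspaces $A_{\mu,\nu}(x)$ with $\mu\ne\nu$ vanish, so $A=\oplus_{\mu}A_{\mu,\mu}(x)$. On each summand $A_{\mu,\mu}(x)$ both $L_x$ and $R_x$ act as the scalar $\mu$, hence $L_x=R_x$, i.e.\ $x$ commutes with all of $A$. So first I would simply note that every generator commutes with everything.

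Next I would show that this property passes to the whole Miyamoto-closure $\Miy(X)$. Each $b\in\Miy(X)$ has the form $x^{\gr}$ for some $x\in X$ and $\gr\in\calg(X)$, and every such $\gr$ is an algebra automorphism (Miyamoto maps of $S$-axes satisfying the fusion rules are Miyamoto involutions). The eigenspace correspondence $(*)$ in the proof of Lemma~\ref{abovename} gives $A_{\mu,\nu}(x^{\gr})=A_{\mu,\nu}(x)^{\gr}$, so $A_{\mu,\nu}(x^{\gr})\ne 0$ exactly when $A_{\mu,\nu}(x)\ne 0$; in particular $S^{\dagger}(x^{\gr})=S^{\dagger}(x)=\emptyset$. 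Applying the first step to $b=x^{\gr}$ then shows that every $b\in\Miy(X)$ also commutes with all of $A$.

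Finally I would invoke Theorem~\ref{Kmt}(i), by which $\Miy(X)$ spans $A$. Writing an arbitrary $u\in A$ as $u=\sum_i\alpha_i b_i$ with $b_i\in\Miy(X)$ and $\alpha_i\in\ff$, for any $w\in A$ we get $uw=\sum_i\alpha_i(b_i w)=\sum_i\alpha_i(w b_i)=wu$, since each $b_i$ commutes with $w$. Hence any two elements of $A$ commute and $A$ is commutative. The only real obstacle is the second step: verifying that the Miyamoto images retain the property $S^{\dagger}=\emptyset$, which rests entirely on the automorphism-invariance of the two-sided eigenspaces supplied by Lemma~\ref{abovename}. (Here I am using, as throughout this subsection, that the axes of $X$ are weakly primitive and satisfy the fusion rules, so that Theorem~\ref{Kmt} and the Miyamoto-involution lemma are available.)
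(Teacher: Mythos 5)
Your proof is correct and takes essentially the same route as the paper's: the paper likewise notes that every axis in $\Cl(X)$ inherits the property $S^{\dagger}=\emptyset$ (hence commutes), and then invokes Theorem~\ref{Kmt}(i), that $\Cl(X)$ spans $A$, to conclude commutativity by bilinearity. The only difference is that you spell out the automorphism-invariance of the two-sided eigenspaces via Lemma~\ref{abovename}, a step the paper's two-line proof leaves implicit.
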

\begin{proof}
Each axis  $x\in \Cl(X)$ satisfies $S^{\dagger}(x)=\emptyset,$ so $xy=yx,$ for $x,y\in\Cl(X).$  Since $\Cl(X)$ spans $A,$  $A$ is commutative.
\end{proof}

\section{The existence of a Frobenius form}\label{Frob}


\begin{Def} A {\it Frobenius form} on an algebra is a  symmetric  bilinear form which is associative, i.e., satisfying $(xy,z)= (x,yz)$ for all $x,y,z$.\footnote{Any associative bilinear form on a commutative axial algebra is symmetric, and thus a Frobenius form,  as seen in~\cite[Lemma~3.5]{HRS1}.}
\end{Def}

\begin{lemma}\label{Or}
Let $A$ be an algebra with a Frobenius form, and let $z\in A.$  Denote by $\mathfrak{R}$ the radical of the form. Then
\begin{enumerate}\eroman
\item $($\cite[Lemma 2.1]{S}$)$
If $u\in A_{\mu,\nu}(z)$ and $v\in A_{\mu',\nu'}(z),$ then $(u,v)=0,$ unless perhaps $\mu=\nu',$ and $\mu'=\nu.$

\item
If $A_{\mu,\nu}(z)A_{\nu,\mu}(z)=0,$ then $(A_{\mu,\nu}(z),A_{\nu,\mu}(z))=0,$ unless perhaps $\mu=\nu=0.$

\item
If $A_{\mu,\nu}(z)^2\subseteq \ff z+A_0(z),$ $(z,z)\ne 0,$ and $\mu\ne\nu,$ or $(\mu,\nu)=(0,0),$ then $A_{\mu,\nu}(z)^2\subseteq A_0(z).$

\item
If $z$ is an axis, and $A_{\mu,\nu}(z)A_{\nu,\mu}(z)=0,$ for each $(\mu,\nu)\in S^{\dagger}(z),$ then $\sum_{(\mu,\nu)\in S^{\dagger}(z)}A_{\mu,\nu}\subseteq \mathfrak{R}.$


\item If $A$ is spanned by idempotents and $Az =0,$ then $z\in \mathfrak{R}$.

\end{enumerate}
\end{lemma}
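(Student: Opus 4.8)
The plan is to show directly that $z$ is orthogonal to every element of $A$ under the form, which is exactly the assertion $z \in \mathfrak{R}$. Since the form is bilinear and $A$ is spanned by idempotents, it suffices to prove $(z,e) = 0$ for an arbitrary idempotent $e \in A$; the case of a general $y \in A$ then follows by writing $y$ as a linear combination of idempotents and expanding by bilinearity.

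So I would fix an idempotent $e$, so that $e = e^2$. First I would use the symmetry of the form to move $e$ into the first slot, $(z,e) = (e,z)$, and then replace $e$ by $e\cdot e$ to get $(e,z) = (e\cdot e, z)$. Applying the associativity identity $(xy,w) = (x,yw)$ with $x = y = e$ and $w = z$ yields $(e\cdot e, z) = (e, ez)$. Now the hypothesis $Az = 0$ says precisely that every left multiple of $z$ vanishes, so in particular $ez \in Az = 0$. Hence $(e,z) = (e,0) = 0$, and therefore $(z,e) = 0$, as desired.

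The argument is essentially a one-line computation, so there is no substantive obstacle; the only point requiring care is the order in which associativity is applied. One must arrange to produce the product $ez$, a left multiple of $z$ that the hypothesis annihilates, rather than $ze$: applying associativity the other way, $(z,e) = (z, e\cdot e) = (ze, e)$, produces $ze$, about which $Az = 0$ gives no information. Using the symmetry of the form at the outset to place the idempotent in front of $z$ is exactly what makes the correct (vanishing) product appear.
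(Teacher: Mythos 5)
Your proof of part (v) is correct, and it is essentially the paper's own argument for that part: the paper computes, for an idempotent $b$, $(b,z)=(b^2,z)=(b,bz)=(b,0)=0$ and then extends over a spanning set of idempotents by bilinearity, which is exactly your chain $(z,e)=(e\cdot e,z)=(e,ez)=(e,0)=0$. (The paper starts from $(b,z)$ directly instead of first invoking symmetry, but that difference is cosmetic; your observation that one must produce $ez$ rather than $ze$ is the same point, handled implicitly.)

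The genuine gap is one of coverage: the statement is a five-part lemma, and your proposal proves only part (v). Parts (i)--(iv) are independent assertions about eigenvectors of $z$, and none of them follows from the idempotent computation. What is missing, in outline: for (i), take $u\in A_{\mu,\nu}(z)$, $v\in A_{\mu',\nu'}(z)$ and shuttle $z$ across the form, $\mu(u,v)=(zu,v)=(v,zu)=(vz,u)=\nu'(u,v)$, so $(u,v)=0$ unless $\mu=\nu'$, and symmetrically unless $\mu'=\nu$; for (ii), the hypothesis $uv=0$ gives $\mu(u,v)=(zu,v)=(z,uv)=0$, so $(u,v)\ne 0$ forces $\mu=0$, and symmetrically $\nu=0$; for (iii), one combines $(z,uv)=\mu(u,v)=0$ (which uses (i), via $\mu\ne\nu$ or $(\mu,\nu)=(0,0)$) with $uv\in\ff z+A_0(z)$, the orthogonality $(z,A_0(z))=0$ from (i), and $(z,z)\ne 0$ to conclude that the $\ff z$-component of $uv$ vanishes; and (iv) follows at once from (i) and (ii). Since these parts---not (v)---are the ones actually used later in the paper (e.g.\ to place $\sum_{(\mu,\nu)\in S^{\dagger}(a)}A_{\mu,\nu}(a)$ in the radical and to prove $A_0(a)^2\subseteq A_0(a)$), the proposal as written cannot stand in for the paper's proof of the lemma.
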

\begin{proof}
(i)  $\mu(u,v)=(zu,v)=(v,zu)=(vz,u)=\nu'(u,v)$ so $(u,v)=0,$ unless $\mu=\nu'.$  By symmetry $(u,v)=0,$ unless $\mu'=\nu.$
\smallskip

\noindent
(ii) Let $u\in A_{\mu,\nu}(z)$ and $v\in A_{\nu,\mu}(z).$  Then $\mu(u,v)=(zu,v)=(z,uv)=0,$ so if $(u,v)\ne 0,$ $\mu=0.$  By symmetry if $(u,v)\ne 0,$ then $\nu=0.$
\smallskip

\noindent
(iii) Let $u,v\in A_{\mu,\nu}(z),$ then $(z,uv)=\mu(u,v)=0,$ by (1), so $uv\in A_0(z).$

\smallskip
\noindent
(iv)  This follows at once from (i) and (ii).

\smallskip
\noindent
(v) For any idempotent $b$ we have $(b,z) = (b^2,z)=(b,bz)=(b,0)=0.$
\end{proof}

%

Thus, \eqref{fr} is a necessary condition for a Frobenius form. But it superfluous in 2-generated PAJ's, as seen in Remark~\ref{conc},  and in type $\eta$ as seen in \cite[Theorem~3.2]{R}. This leads us to ask whether a pre-PAJ is necessarily a PAJ? We shall see this is often the case in Corollary~\ref{a0s}.

 The purpose of this section is to prove
the following theorem (extending \cite[Theorem 4.1, p.~407]{HSS}):

\begin{thm}\label{thm frob}
Suppose the weak PAJ $A$ is generated by a 
homogeneous set~$X$ of special axes.

Then $A$ admits a Frobenius form, $(\ ,\ )$, which satisfies  $( a,y) =
\gvp_a(y)$, and hence $(a,a)=1,$  for all axes $a\in \Cl(X)$ and $y\in A.$ Furthermore $(xy,z)=(yx,z),$ for $x,y,z\in A.$
\end{thm}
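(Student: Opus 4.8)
The plan is to \emph{define} the form on the spanning set and extend it bilinearly. For axes $a,b\in\Cl(X)$ set $(a,b):=\gvp_a(b)$, the scalar with $b_1=\gvp_a(b)\,a$; in particular $(a,a)=\gvp_a(a)=1$. Since $\Cl(X)$ spans $A$ by Theorem~\ref{Kmt}(i), declaring $(a,y):=\gvp_a(y)$ for an axis $a$ and arbitrary $y$, and extending bilinearly in the first slot, should determine a form on all of $A$. Observe that, conversely, any symmetric associative form with $(a,a)=1$ must satisfy $(a,y)=\gvp_a(y)$: by Lemma~\ref{Or}(i) the pairing of $a\in A_{1,1}(a)$ against $y$ kills every eigencomponent of $y$ except the $(1,1)$-part $\gvp_a(y)\,a$. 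Thus the definition is forced, and the content is to show it is (1) symmetric on $\Cl(X)$, (2) well defined as a bilinear form, and (3) associative with $(xy,z)=(yx,z)$.

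For (1) I would pass to the $2$-generated subalgebra $B=\lan\lan a,b\ran\ran$ for each pair $a,b\in\Cl(X)$. By Lemma~\ref{abovename}(i) every element of $\Cl(X)$ is again a special axis with the same eigenvalue data, and homogeneity forces $S^\circ(b)\in\{S^\circ(a),\tilde S^\circ(a)\}$; hence $B$ is one of the algebras of Example~\ref{class}. As both $a$ and $b$ are special, hence weakly primitive, only cases (i)--(vi) occur, and in each the multiplication table displays $b$ explicitly in the $a$-eigenbasis (and symmetrically), so $\gvp_a(b)=\gvp_b(a)$ is read off directly. Remark~\ref{conc}(ii),(iv) confirms that the directions $A_{\mu,\nu}(a)$ with $\mu\ne\nu$ contribute nothing to these coefficients.

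The main obstacle is (2): $\Cl(X)$ spans $A$ but is very far from a basis, so I must show that a linear dependence $\sum_i c_i a_i=0$ among axes forces $\sum_i c_i\gvp_{a_i}(y)=0$ for all $y$, i.e. that the locally defined projections $\gvp_a$ glue into a single global form. I would prove (2) together with associativity by induction along the Miyamoto generation of $\Cl(X)$: the set $W$ of elements on which consistency is already established contains $X$, is transported by Miyamoto involutions via the equivariance $\gt_{a^{\gr};S'}=\gt_{a;S'}^{\gr}$ of Lemma~\ref{abovename}(ii), and is closed under multiplication by $X$ once associativity is known on the shorter products; Theorem~\ref{Kmt}(ii) then gives $W=A$, the $2$-generated algebras of Example~\ref{class} serving as base cases. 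Granting well-definedness, associativity $(ab,c)=(a,bc)$ is trilinear, so it suffices to check it on triples from $\Cl(X)$, where expanding $b,c$ in the $L_a,R_a$-eigenbasis and applying Lemma~\ref{Or}(i) collapses both sides to the matched $(1,1)$- and $(0,0)$-components, which the fusion rules and $A_0(a)^2\subseteq A_0(a)$ identify. Finally $(xy,z)=(yx,z)$ says exactly that $xy-yx\in\mathfrak{R}$; since commutators are carried by the noncommutative directions $A_{\mu,\nu}(a)$ with $\mu\ne\nu$, and Remark~\ref{conc}(ii) gives $\mu+\nu=1$ with $A_{\mu,\nu}(a)^2=0$, Lemma~\ref{Or}(iv) places $\sum_{(\mu,\nu)\in S^\dagger(a)}A_{\mu,\nu}(a)\subseteq\mathfrak{R}$, so these directions are invisible to the form and the noncommutativity disappears.
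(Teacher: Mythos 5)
You correctly isolate well-definedness as the main obstacle, but your resolution of it does not work, and you miss the device by which the paper sidesteps the problem. Your induction ``along the Miyamoto generation'' via Theorem~\ref{Kmt}(ii) is not a proof: that theorem applies to a \emph{subspace} $W$ containing $X$ and closed under multiplication by $X$, whereas ``the set of elements on which consistency is already established'' is not a subspace in any defined sense --- the difficulty is linear dependences $\sum_i c_i a_i=0$ inside the spanning set $\Cl(X)$, a global phenomenon, not a property of individual elements that propagates under products. The paper's solution is to choose a \emph{basis} $\calb\subseteq\Cl(X)$ of $A$, define $(a,b)=\gvp_a(b)$ only for $a,b\in\calb$, and extend bilinearly, so well-definedness is automatic. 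The symmetry $\gvp_b(c)=\gvp_c(b)$ on all of $\Cl(X)$ (Proposition~\ref{sym}, essentially your step (1)) is then used twice: once to make the form symmetric, and once to upgrade the identity $(a,y)=\gvp_a(y)$ from $a\in\calb$ to \emph{every} weakly primitive axis $a$ of the same type: for such $a$ and $b\in\calb$ one has $\gvp_a(b)=\gvp_b(a)=(b,a)=(a,b)$, and then linearity of $\gvp_a$ gives $(a,y)=\gvp_a(y)$ for all $y\in A$. Your worry about dependences is answered only a posteriori: since $(a,y)=\gvp_a(y)$ holds for each $a\in\Cl(X)$ and the left slot of the (already well-defined) form is linear, $\sum_i c_ia_i=0$ does force $\sum_i c_i\gvp_{a_i}(y)=0$.

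Second, your associativity argument is circular. You invoke Lemma~\ref{Or}(i) to get orthogonality of distinct eigenspaces of $a$, but Lemma~\ref{Or} assumes $A$ already carries a Frobenius (i.e.\ associative) form --- exactly what is being proved. The paper instead derives orthogonality (Lemma~\ref{O}) from invariance of the form under all automorphisms (its step (3), which needs only step (2) and Lemma~\ref{abovename}), applied to the Miyamoto involutions $\tau=\tau_{a,S(a)\setminus\{(\mu,\nu)\}}$, giving $(u,v)=(u^{\tau},v^{\tau})=(-u,v)$. Likewise your final step, placing commutators in $\mathfrak{R}$ via Remark~\ref{conc}(ii) and Lemma~\ref{Or}(iv), again presupposes the Frobenius property and moreover extends facts proved only inside $2$-generated subalgebras (such as $A_{\mu,\nu}(a)A_{\nu,\mu}(a)=0$ computed in all of $A$, which is precisely the open question of Section~3) without justification; the paper obtains $(xy,z)=(yx,z)$ purely formally from the two one-sided identities $(xy,z)=(y,xz)$ and $(yx,z)=(y,zx)$ together with symmetry, with no input about where commutators live.
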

 To prove the theorem, we shall see that it is enough to obtain a set  of axes $\widehat X$ spanning $A,$ for which $\varphi_{b}(c) = \varphi_{c}(b)$ for all $b,c\in\widehat  X.$
 Indeed, by Theorem~\ref{Kmt}(i), and Lemma \ref{sym} below, $\widehat X=\Cl(X)$ satisfies this property.

\begin{prop}\label{sym}
$\gvp_a(b)=\gvp_b(a)$ for any weakly primitive $S$-axes
$a,b\in A$ satisfying the fusion rules, under any of the following conditions:
 \begin{enumerate}\eroman
\item
$ab = ba = 0$, in which case $\gvp_a(b)=\gvp_b(a) = 0;$

\item
$a$ and $b$ do not commute;

\item
$\langle\langle a,b\rangle\rangle$ is commutative, of dimension 3, and $a,b$ are of the same type.
\item
$\langle\langle a,b\rangle\rangle$ is commutative, of dimension 4, and $a,b$ are of the same type.
\end{enumerate}
\end{prop}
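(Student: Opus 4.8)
The plan is to prove the symmetry $\gvp_a(b)=\gvp_b(a)$ case by case, exploiting the fact that $\gvp_a(b)$ is by definition the coefficient of $a$ in the $A_1(a)$-component of $b$, and that $b=\gvp_a(b)\,a+b_0+b_{S^\comm(a)}+b_{S^\dagger(a)}$. In each case I want to extract $\gvp_a(b)$ from a relation that is manifestly symmetric in $a$ and $b$, or else to use the explicit classification of $\lan\lan a,b\ran\ran$ from Example~\ref{class}.

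For case (i), if $ab=ba=0$ then $b\in A_0(a)$ entirely (since $L_a$ and $R_a$ annihilate $b$), so the $A_1(a)$-projection of $b$ vanishes and $\gvp_a(b)=0$; by the same argument with the roles of $a,b$ reversed, $\gvp_b(a)=0$, giving the claim trivially. For case (ii), when $a$ and $b$ do not commute, $\lan\lan a,b\ran\ran$ is one of the noncommutative algebras of Example~\ref{class}, namely (iv), (v), (vi), or (vii). In each of these the element $b$ is written explicitly as $b=a+b_0+\sum b_{\mu,\nu}$ (or with a scalar on $a$), so one reads off $\gvp_a(b)$ directly as the coefficient of $a$. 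The symmetry statement ``$a$ and $b$ act symmetrically'' recorded in Examples (vi) and the structure of the others should force $\gvp_a(b)=\gvp_b(a)$; the cleanest route is to verify that in each listed example the coefficient of $a$ in the $A_1(a)$-decomposition of $b$ equals the coefficient of $b$ in the $A_1(b)$-decomposition of $a$, which I expect to be an immediate consequence of the defining multiplication tables.

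For the commutative cases (iii) and (iv), I would invoke Remark~\ref{conca}: since $a,b$ commute and $a$ is (in the relevant situation) special, $\lan\lan a,b\ran\ran$ is commutative, and the list of commutative $2$-generated PAJs (the HRS algebras of Example~\ref{class}(i), together with the commutative non-HRS examples) is available. When $a,b$ are of the same type, the algebra is symmetric under interchanging $a$ and $b$, and the dimension constraint ($3$ in (iii), $4$ in (iv)) pins down which example occurs. In the $3$-dimensional case this should be an HRS algebra $B(\eta,1)$ or the commutative Example~\ref{class}(viii); in the $4$-dimensional case it is Example~\ref{class}(ii) or (ix). In each I would compute $\gvp_a(b)$ and $\gvp_b(a)$ from the explicit basis and check equality. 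Alternatively, and more uniformly, I would try to use the associativity heuristic: if one already knows a Frobenius form exists on the commutative subalgebra (which holds for HRS algebras by \cite{HRS1}), then $\gvp_a(b)=(a,b)=(b,a)=\gvp_b(a)$ by symmetry of the form, provided $(a,y)=\gvp_a(y)$; but since existence of the form is exactly what we are building toward, the safe approach is the direct computation from the classification.

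The main obstacle I anticipate is case (iv), the $4$-dimensional commutative situation, where there are genuinely two subcases (Examples (ii) and (ix)) and the multiplication is given only implicitly in (ix) via the two linear relations for $b_{\mu,\mu}^2$ and $b_{\nu,\nu}^2$. Verifying that ``$a,b$ of the same type'' is compatible only with the symmetric instances, and then extracting $\gvp_b(a)$ when $a$ is expressed in the $b$-eigenbasis rather than the $a$-eigenbasis, will require solving for $a$ in terms of $b,b_0,b_{\mu,\mu},b_{\nu,\nu}$ and isolating its $A_1(b)=\ff b$ component. The bookkeeping of re-expressing one axis in the other's eigenbasis is where the real work lies; cases (i)–(iii) I expect to be short.
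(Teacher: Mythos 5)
Your overall method is the same as the paper's: handle (i) by the trivial observation that both projections vanish, and reduce (ii)--(iv) to the classification in Example~\ref{class}, reading off $\gvp_a(b)$ and $\gvp_b(a)$ from the explicit presentations. The problems are in your bookkeeping of which examples are actually in play. In case (ii), the noncommutative examples in which \emph{both} axes are weakly primitive are (iii), (iv), (v), (vi): in (iii), (iv), (vi) one reads off $\gvp_a(b)=\gvp_b(a)=1$, while in (v), where $b=b_0+\sum b_{\mu,\nu}$ has no $a$-component, $\gvp_a(b)=\gvp_b(a)=0$. Your list ``(iv), (v), (vi), or (vii)'' omits (iii) --- a genuine hole in a proof that proceeds by exhaustive case-check --- and includes (vii), which cannot occur under the hypotheses, since there $b$ is not weakly primitive.

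The larger miss is the simplification that makes the whole proof short: the proposition assumes \emph{both} $a$ and $b$ are weakly primitive, and this discards Examples (vii), (viii), (ix) outright, since in each of them $\dim A_1(b)=2$ and $b$ fails weak primitivity (the paper records exactly this point immediately after its proof). Consequently the ``main obstacle'' you anticipate --- re-expressing $a$ in the $b$-eigenbasis of Example (ix) --- never arises: case (iv) reduces to Example (ii) alone, where both decompositions $b=a+b_0+b_{2,2}+b_{\half,\half}$ and $a=b+a_0+a_{2,2}+a_{\half,\half}$ are part of the presentation, so $\gvp_a(b)=\gvp_b(a)=1$ with no computation; and case (iii) does not need (viii) at all, reducing to the HRS algebras, which is why the paper can simply cite \cite{HRS}. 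Indeed your own phrase ``$A_1(b)=\ff b$'' while discussing (ix) is inconsistent with (ix) itself, where $\dim A_1(b)=2$ --- that contradiction is precisely the sign that (ix) should have been excluded from consideration at the outset.
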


\begin{proof} (i) The projections are both $0.$

(ii) The only relevant cases are Example~\ref{class}(iii),(iv),(vi) in which $\varphi_a(b) = \varphi_b(a) =1,$  and Example~\ref{class}(v) in which  $\varphi_a(b) = \varphi_b(a) =0. $

(iii) This appears in \cite{HRS}.

 (iv) By  Example~\ref{class}(ii),(vi), by symmetry of the presentation.
 \end{proof}

The other algebras of  Example~\ref{class} are not relevant, since the axis $b$ is not weakly primitive.

\begin{proof}[Proof of Theorem \ref{thm frob}]\hfill
\medskip

First note that by Theorem \ref{Kmt}(i) and by Lemma \ref{sym} for $b,c\in \Cl(X),$ $\varphi_b(c)=\varphi_c(b).$ As in the proof of \cite[Theorem~6.3]{RoSe4}, we start by defining a symmetric bilinear form $(\cdot\, ,\, \cdot)$ on a
 base $\calb\subseteq \Cl(X).$ We put $(a,b)=\varphi_a(b),$  for all $a,b\in\mathcal{B},$
which we extend by linearity to $A$
to get the symmetric bilinear form $(\cdot\, ,\,
\cdot)$.
We proceed by a sequence of steps.
\begin{enumerate}
\item
$(a\, ,\, y)=\gvp_a(y),$ for a given axis $a\in \mathcal{B}$ and all $y\in A.$
Indeed, for $y =  \sum_{b\in\calb}\ga_b
b ,$ since $\gvp_a$ is linear,
\begin{gather*}\tag{$*$}
\gvp_a(y)=
\gvp_a\left(\sum_{b\in\calb}\ga_b b\right)
=\sum_{b\in\calb}\ga_b(a,b) =(a,\sum_{b\in\calb}\ga_b b)=(a,y).
\end{gather*}

Let now $a\in A$ be a weakly primitive axis satisfying the fusion rules and having the same type as the axes in $X.$ Then $\varphi_a(b)=\varphi_b(a)=(a,b),$ for all $b\in\mathcal{B}.$ Thus, as in $(*)$ we get,

\item
$(a\, ,\, y)=\gvp_a(y),$ for every weakly primitive  axis $a\in A$ satisfying the fusion rules and having the same type as the axes in $X,$ and all $y\in A;$
in particular
$(a\, ,\, a)=1.$

\item
$(\cdot\, ,\, \cdot)$ is invariant under any automorphism of $A$. In particular
$(\cdot\, ,\, \cdot)$ is invariant under any automorphism in $\calg(X).$
\end{enumerate}

   Let $\psi\in\Aut(A)$, and $a\in\Cl(X).$ If
\[
u=\gvp_a(u)a+u_0+\sum _{(\mu,\nu)\in S(a)} u_{\mu,\nu}
\]
is the decomposition of $u\in A$ with respect to $a,$
 then  the decomposition of $u^\psi$ with
respect to the primitive axis $a^\psi$
is $u^\psi=\gvp_a(u)a^\psi+u_0^\psi+\sum _{(\mu,\nu)\in S(a)} u_{\mu,\nu}^\psi$. Hence
$\gvp_{a^\psi}(u^\psi)=\gvp_a(u)$, and so, by (2), $(a^\psi,u^\psi)=(a,u)$.  Hence by linearity, also $(v,u)=(v^{\psi},u^{\psi}),$ for all $v,u\in A.$
Before concluding, we need a lemma  inspired by Lemma~\ref{Or}(i).

\begin{lemma}\label{O}
For every  $a\in\Cl(X),$ the different eigenspaces of $a$ are
orthogonal with respect to $(\cdot\, ,\, \cdot)$.
\end{lemma}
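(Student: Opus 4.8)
The plan is to prove orthogonality one pair of eigenspaces at a time: it suffices to show that $A_{\mu,\nu}(a)\perp A_{\mu',\nu'}(a)$ whenever $(\mu,\nu)\ne(\mu',\nu')$, where the labels range over $(1,1)$, $(0,0)$, and the pairs in $S(a)$. I would split these pairs into two families and apply a different tool to each: the \emph{defining property} $(a,y)=\gvp_a(y)$ of the form for the pairs involving $A_1(a)=\ff a$, and \emph{invariance under the Miyamoto involutions} for the rest.

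First I would record that every $a\in\Cl(X)$ equals $x^{\gr}$ for some $x\in X$ and $\gr\in\calg(X)$, so by Lemma~\ref{abovename}(i) it is a weakly primitive axis satisfying the fusion rules of the same type as the axes of $X$; hence step~(2) of the proof of Theorem~\ref{thm frob} applies and gives $(a,y)=\gvp_a(y)$ for all $y\in A$. Now if $y$ lies in any eigenspace of $a$ other than $A_1(a)$, its projection onto $A_1(a)$ vanishes, that is $\gvp_a(y)=0$, whence $(a,y)=0$. This at once yields $A_1(a)\perp A_0(a)$ and $A_1(a)\perp A_{\mu,\nu}(a)$ for every $(\mu,\nu)\in S(a)$, disposing of all pairs meeting $A_1(a)$.

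For a pair of distinct eigenspaces neither of which is $A_1(a)$, at least one label, say $(\mu,\nu)$, lies in $S(a)$ (the two labels are distinct elements of $\{(0,0)\}\cup S(a)$, so they cannot both be $(0,0)$). Set $\gt:=\gt_{a;\{(\mu,\nu)\}}$, which is an automorphism of $A$ since $a$ satisfies the fusion rules; by construction it acts as $-1$ on $A_{\mu,\nu}(a)$ and as $+1$ on every other eigenspace of $a$. Since the form is invariant under every automorphism of $A$ by step~(3), for $u\in A_{\mu,\nu}(a)$ and $v$ in the other eigenspace I get
\[
(u,v)=(u^{\gt},v^{\gt})=(-u,v)=-(u,v),
\]
and as $\charc\ \ff\ne 2$ this forces $(u,v)=0$. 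Letting $(\mu,\nu)$ run over the labels in $S(a)$ covers $A_0(a)\perp A_{\mu,\nu}(a)$ and $A_{\mu,\nu}(a)\perp A_{\mu',\nu'}(a)$ for distinct $(\mu,\nu),(\mu',\nu')\in S(a)$, so combined with the previous paragraph every pair of distinct eigenspaces is orthogonal.

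The one point requiring care, and the reason two separate arguments are needed, is the pair $A_1(a),A_0(a)$: both are fixed by \emph{every} Miyamoto involution $\gt_{a;S'}$, so automorphism invariance cannot distinguish them, and their orthogonality has to be extracted directly from $(a,A_0(a))=\gvp_a(A_0(a))=0$. Everything else reduces to the standard $\pm1$-eigenspace sign argument, so I expect no further obstacle.
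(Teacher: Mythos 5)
Your proof is correct and takes essentially the same route as the paper: orthogonality of $A_1(a)=\ff a$ to the rest via $(a,y)=\gvp_a(y)$, and orthogonality among the remaining eigenspaces via invariance of the form under Miyamoto involutions (step~(3) of the proof of Theorem~\ref{thm frob}). In fact your choice $\gt_{a;\{(\mu,\nu)\}}$ is the cleaner one: it makes the sign computation $(u,v)=(u^{\gt},v^{\gt})=(-u,v)$ valid uniformly, including when the second eigenspace is $A_0(a)$, whereas the paper's printed $\tau_{a,S(a)\setminus\{(\mu,\nu)\}}$ appears to be a typo for exactly the involution you used, since only that involution matches the paper's own displayed computation.
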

\begin{proof}
Clearly, if $u\in A_0(a)+A_{S}(a),$ then $(a,u)=\gvp_a(u)=0$.
Hence $A_1(a)=\ff a$ is orthogonal to both $A_0(a)$ and to
$A_{S}(a)$. Assume $(0,0)\ne (\mu,\nu)\ne (\mu',\nu').$ If $u \in A_{\mu,\nu}(a)$ and  $v \in A_{\mu',\nu'}(a)$, take $\tau:=\tau_{a, S(a)\setminus\{(\mu,\nu)\}}.$  Then, by (3) above,  $(u,v)=(u^{\tau}, v^{\tau})=(-u,v),$ so $(u,v)=0.$\qedhere

\end{proof}

Now, as in \cite{RoSe4}, it is easy to show associativity of the form by  checking  eigenspaces. Let $a\in\Cl(X).$ First we note that $(ay,z) = (y,az),$ for all $y,z\in A.$ Indeed, this is obvious if $y,z$ are in the same component with respect to $a$, and if $y\in A_{\mu,\nu}(a)$ and $z \in A_{\mu',\nu'}(a)$ for $(\mu,\nu)\ne (\mu',\nu')$, then both sides are in $( A_{\mu,\nu}(a), A_{\mu',\nu'}(a))=0.$ Thus by linearity $(ay,z)=(y,az),$ for all $y,z\in A.$ By linearity again, $(xy,z)=(y,xz),$ for all $x,y,z\in A.$

Similarly $(yx,z)=(y,zx).$  Now $(xy,z)=(y,xz)=(xz,y)=(x,yz).$ But also $(xy,z)=(x,zy)=(zx,y)=(yx,z).$
%
\end{proof}

\begin{remark} $ $\begin{enumerate}\eroman
    \item   For any  weakly primitive axis $a$ satisfying the fusion rules, $(a,a)=1$ is equivalent to $(a,y)=\varphi_a(y),$ for all $y\in A,$ by Lemma \ref{Or}(i).

    This implies that the Frobenius form in Theorem \ref{thm frob} is uniquely defined under the condition that $(a,a)=1,$ for all $a\in\Cl(X).$ Indeed, if $y = \sum \gc_i a_i$ then $$(y,z) = \sum _i(\gc_i a_i,z) = \sum _i\gc_i ( a_i,z)= \sum \gc_i \gvp_{a_i}(z).  $$

\item     One might wonder whether the condition of homogeneity is needed. We know that all 2-generated weak PAJ's of Example~\ref{class} are homogeneous, but we do not know how to prove the theorem without a homogeneous generating set of axes, and with $3$-generated algebras we do not know how to modify the axes to get a homogeneous set.
\end{enumerate}\end{remark}
\begin{cor}
    \label{a0s} Under the hypotheses of Theorem \ref{thm frob}, a weakly primitive axis $a$ satisfying the fusion rules and $(a,a)\ne 0$ is of Jordan type.
\end{cor}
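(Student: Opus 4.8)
The plan is to deduce the corollary as essentially a one-line application of Lemma~\ref{Or}(iii), once the Frobenius form is in hand. Under the hypotheses of Theorem~\ref{thm frob} the algebra $A$ carries a Frobenius form $(\,,\,)$, so Lemma~\ref{Or} is available for every $z\in A$, in particular for $z=a$. The target is precisely \eqref{fr}, that is, $A_0(a)^2\subseteq A_0(a)$.

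The single preliminary step is to establish the weaker containment $A_0(a)^2\subseteq \ff a + A_0(a)$. This is immediate from the fusion rules together with weak primitivity: fusion rule~(a) asserts that $A_1(a)+A_0(a)$ is a subalgebra of $A$, and since $a$ is weakly primitive we have $A_1(a)=\ff a$; hence $A_0(a)^2\subseteq A_1(a)+A_0(a)=\ff a + A_0(a)$. (I note that fusion rule~(c) does not apply directly here, since it is stated only for pairs in $S$, and $(0,0)\notin S$; so the containment must come from rule~(a).)

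Next I would apply Lemma~\ref{Or}(iii) with $z=a$ and $(\mu,\nu)=(0,0)$, which is exactly one of the two admissible cases of that lemma. Its hypotheses are met: recalling $A_{0,0}(a)=A_0(a)$, the previous step gives $A_{0,0}(a)^2=A_0(a)^2\subseteq \ff a + A_0(a)$, and $(a,a)\ne 0$ by assumption. The conclusion is $A_0(a)^2\subseteq A_0(a)$, which is \eqref{fr}; thus $a$ is of Jordan type.

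There is essentially no obstacle here, as the substantive work has already been done in constructing the Frobenius form (Theorem~\ref{thm frob}) and in proving Lemma~\ref{Or}(iii). The only point requiring attention is to observe that the pair $(0,0)$ is one of the cases permitted in Lemma~\ref{Or}(iii): it is precisely the hypothesis $(a,a)\ne 0$ that forces the $\ff a$-component of any product in $A_0(a)^2$ to vanish, thereby upgrading the fusion-rule containment $A_0(a)^2\subseteq \ff a + A_0(a)$ to the Jordan condition $A_0(a)^2\subseteq A_0(a)$.
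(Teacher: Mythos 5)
Your proposal is correct and is exactly the paper's argument: the paper proves the corollary by citing Lemma~\ref{Or}(iii) applied to $z=a$ with $(\mu,\nu)=(0,0)$, leaving implicit the step you spell out, namely that fusion rule~(a) plus weak primitivity give $A_0(a)^2\subseteq A_1(a)+A_0(a)=\ff a+A_0(a)$. Your write-up simply makes that preliminary containment and the verification of the lemma's hypotheses explicit.
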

\begin{proof}
    By Lemma~\ref{Or}(iii).
\end{proof}


\section{Applications to noncommutative PAJ's}

In view of Lemma~\ref{Or}(i),(ii),  $\sum_{(\mu,\nu) \in S^\dagger} A_{\mu,\nu}(a)$ is in the radical of the Frobenius form (if such a form exists).

\begin{remark}
    Let $A=\lan\lan X\ran\ran,$ where $X$ is a set of axes satisfying the fusion rules, such that $A$ possesses a Frobenius form  and $a, b\in X.$  Let
\[
Z_a:=\sum_{(\mu,\nu)\in S^\dag(a)}A_{\mu,\nu}(a).
\]
\begin{enumerate}\eroman
\item
$Z_a\subseteq\mathfrak{R},$ by  Lemma~\ref{Or}(iv);
 \item If $A = \langle\langle a,b \rangle\rangle $ with $a,b$ special, then   $Z_a = Z_b$ and $Z_a^2 = 0,$   as seen by looking at the list in~Example~\ref{class}.
 Hence $Z_a$ is an ideal of $A,$ and
 $ A/Z_a$  is commutative.

\item  $Z_a^2a = 0,$   by Lemma \ref{Or}(iii). We noted in~Example~\ref{class}(iii) that one need not  have $ Az = 0$ implies $z=0.$
\end{enumerate}
\end{remark}

Let us address the following question:
\medskip

If $X$ is a set of axes satisfying the fusion rules,  is $Z_a^2 = 0$?
\medskip
\begin{lemma}
    To prove  $Z_a^2 = 0$ for axial algebras generated by an arbitrary number of special axes, it is enough to verify the assertion  for algebras generated by three special axes.
\end{lemma}
\begin{proof}
    In view of Theorem~\ref{Kmt}, it suffices to  prove $b_{\mu,\nu}c_{\mu,\nu}=0$ for axes $b,c.$
\end{proof}




\subsection{Noncommmutative examples}$ $

We conclude with some illustrative noncommutative examples which are not 2-generated,
generalizing the ``uniform'' and ``exceptional'' PAJ's of~\cite{RoSe5}.

\begin{example}[{\cite[Example~2.2]{RoSe5}}]\label{un1}
    Suppose   $\mu_{i,j}+ \nu_{i,j}=1,$ for $i,j\in I$.
    Define the algebra generated by a set of idempotents $E
= \{ x_i: i \in I\}$, satisfying $x_ix_j = \nu_{i,j} x_i + \mu_{i,j} x_j$ for
all $i\ne j \in I.$   All axes are special of Jordan type $\emptyset$.

When all the $\mu_{i,j}$ are equal, and  all the $\nu_{i,j}$ are equal, we get the ``uniform algebra'' of {\cite[Example~2.2]{RoSe5}}.
\end{example}

For the next two examples,   take sets $S^\dagger_{i,j} \ne\emptyset$   comprised of pairs $(\mu_{i,j}, \nu_{i,j}) : i \in I$ for which  $\mu_{i,j}+ \nu_{i,j}=1,$ and $S^\dagger = \cup_{i,j} S^\dagger_{i,j}.$ (To avoid complications in notation, one strictly should take $\mu_{i,j},\nu_{i,j} \ne \half,$ although the constructions also work in that case.)   Let $X=\{x_i:i\in I\}$ and $Y= \{y_i:i\in I\}$.

\begin{example}\label{exc1}
    (Generalizing \cite[Example~3.18]{RoSe5}).
    Define

     $A=\sum_{k\in I} \ff x_k +\sum \ff y_k+  \sum_{i,j}\sum_ {(\mu_i,\nu_j)\in S^\dag }\ff z_{\mu_i,\nu_j}$,
where we define  multiplication in~$A$ according to the following   rules,  with the sums taken over all the $ (\mu_i,\nu_j) \in S^\dag  $:

\begin{equation}\label{eq111}
   x_i y_j = \sum \mu_{i,j} z_{\mu_{i,j},\nu_{i,j}}, \qquad y_i x_j = \sum \nu_{i,j} z_{\mu_{i,j},\nu_{i,j}},
\end{equation}

\begin{equation}\label{eq112}
   x_i z_{\mu_{i,j},\nu_{i,j}} =  \mu_{i,j} z_{\mu_{i,j},\nu_{i,j}} = z_{\mu_{i,j},\nu_{i,j}} y_j, \qquad  z_{\mu_{i,j},\nu_{i,j}} x_i =  \nu_{i,j} z_{\mu_{i,j},\nu_{i,j}} = y_j z_{\mu_{i,j},\nu_{i,j}},
\end{equation}
\begin{equation}\label{eq113}
    z_{\mu_{i,j},\nu_{i,j}}   z_{\mu_{i',j'},\nu_{i',j'}} = 0, \quad \forall i,i',j,j'\in I.
\end{equation}

Each axis $x_i$ of $X$ is special of Jordan type $0$.
 By symmetry, each $y_i$ is special of Jordan type $0$.    We specialize to a more manageable example when all the $\mu_{i,j}$ are the same and all the $\nu_{i,j}$ are the same.    We specialize further to
 \cite[Example~3.21]{RoSe5} when $|S^\dagger|=1.$
\end{example}

\begin{example}\label{k2}
    (Generalizing
\cite[Example~4.12]{RoSe5}.
 Let  $S^{\circ}=S^{\dagger}\cup\{(2,2)\}.$
Let $A=\sum \ff x_i + \sum \ff y_{i,j;2}+ \sum_ {i,j\in I} \ff z_{\mu_{i,j},\nu_{i,j}}$, and
we put $$y_{i,j;0} = y_{j}
+y_{i,j;2}- \sum z_{\mu_{i,j},\nu_{i,j}},$$ and
we define  multiplication in~$A$ according to the following   rules,  taken over all $ (\mu_{i,j},\nu_{i,j})\in S ^\dag: $

\[
z_{\mu_{i,j},\nu_{i,j}}z_{\mu_{i',j'},\nu_{i',j'}}=0,\quad\forall i,j,i',j'\in I,
\]
\[
\textstyle{y_{i,j;0}^2=\frac{3}{2} y_{i,j;0},\quad y_{i,j;0}y_{i,j;2}=y_{i,j;2}y_{i,j;0}=-\frac{3}{2}y_{i,j;2},}\]\[\textstyle{ y_{i,j;0}b_{\mu,\nu}=b_{\mu,\nu}y_{i,j;0}=0,\ \forall (\mu,\nu)\in S^\dagger,}
\]
\[ x_i z_{\mu_i,\nu_j} = z_{\mu_i,\nu_j}y_j=\nu b_{\mu,\nu} \qquad  z_{\mu_i,\nu_j}x_i = y_j z_{\mu_i,\nu_j}=\mu_j z_{\mu_i,\nu_j},\ \forall (\mu_,\nu_j)\in S^\dag_{i,j}.\]

The axes $x_i$ and $y_j$ act symmetrically; both $x_i$ and $y_j$ are special of Jordan type   2.
\end{example}

\end{document}